\newcommand{\bel}[1]{\begin{equation}\label{#1}}
\newcommand{\be}{\begin{equation}}
\newcommand{\ba}{\begin{eqnarray}}
\newcommand{\ea}{\end{eqnarray}}
\newcommand{\rf}[1]{(\ref{#1})}
\newcommand{\bi}{\bibitem}
\newcommand{\qe}{\end{equation}}
\newcommand{\R}{\mathbb{R}}
\newcommand{\N}{\mathbb{N}}
\newcommand{\Np}{\mathbb{N}_+}
\newcommand{\D}{\Delta}
\newcommand{\map}{\longrightarrow}
\newcommand{\too}{\longrightarrow}
\newcommand{\half}{\frac12}
\newcommand{\intlim}{\int\limits}
\newcommand{\smin}{\setminus}
\providecommand{\ce}{\mathrel{\mathop:}=}
\providecommand{\co}{\colon}
\providecommand{\itind}[1]{\textit{#1}\index{#1}}
\providecommand{\txind}[1]{#1\index{#1}}
\providecommand{\cl}[1]{\overline{#1}}
\providecommand{\com}[1]{}
\providecommand{\abs}[1]{\lvert#1\rvert}
\providecommand{\set}[1]{\lbrace#1\rbrace}
\providecommand{\bigset}[1]{\big\lbrace#1\big\rbrace}
\providecommand{\Bigset}[1]{\Big\lbrace#1\Big\rbrace}
\DeclareMathOperator{\Div}{div}
\DeclareMathOperator{\supp}{supp}
\newcommand{\bd}{\partial}
\newcommand{\dd}[2]{\frac{\partial#1}{\partial#2}}
\newcommand{\ddd}[3]{\frac{\partial^2#1}{\partial{#2}\partial{#3}}}
\providecommand{\dcd}{\,\cdot\,{,}\,\cdot\,}
\providecommand{\fdt}{\,\cdot\,}
\newcommand{\leb}{\lambda\hspace{-5 pt}\lambda}
\renewcommand{\phi}{\varphi}
\newenvironment{eqn*}{\begin{equation*}}{\end{equation*}}
\newcommand{\ie}{i.\,e.\@\xspace}
\newcommand{\eg}{e.\,g.\@\xspace}
\newcommand{\cf}{cf.\@\xspace}
\newcommand{\wrt}{w.\,r.\,t.\@\xspace}
\newcommand{\resp}{resp.\@\xspace}
\newcommand{\zeroth}{0{th}\xspace}
\newcommand{\first}{1{st}\xspace}
\newcommand{\ord}{-{th}\xspace}
\providecommand{\WF}{Wright--Fisher\xspace}
\providecommand{\KBE}{Kolmogorov backward equation\xspace}
\providecommand{\KFE}{Kolmogorov forward equation\xspace}
\numberwithin{equation}{section} 
\theoremstyle{plain} 
\newtheorem{thm}{Theorem}[section]
\newtheorem{prop}[thm]{Proposition}
\newtheorem{lem}[thm]{Lemma}
\newtheorem{cor}[thm]{Corollary}
\theoremstyle{definition}
\newtheorem{dfi}[thm]{Definition}
\newtheorem{rmk}[thm]{Remark}
\definecolor{gruen}{rgb}{0.6,0,0.5}
\theoremstyle{plain}
\newtheorem*{thm*}{Theorem}
\begin{document}
\title{A hierarchical extension scheme for solutions of the \WF model}

\author{Julian Hofrichter\footnote{hofricht@mis.mpg.de}, Tat Dat Tran\footnote{trandat@mis.mpg.de}, Jürgen Jost\footnote{jost@mis.mpg.de, phone +49-341-9959550}}

\date{\today}

\maketitle

\begin{abstract}
We develop a global and hierarchical scheme for the forward  Kolmogorov (Fokker-Planck) equation of the diffusion approximation of the Wright-Fisher model of population genetics. That model describes the random genetic drift of several alleles at the same locus in a population. The key of our scheme is to connect the solutions before and after the loss of an allele. Whereas in an approach via stochastic processes or partial differential equations, such a loss of an allele leads to a boundary singularity, from a biological or geometric perspective, this is a natural process that can be analyzed in detail. Our method depends on evolution equations for the moments of the process and a careful analysis of the boundary flux. 
\end{abstract}

{\bf Keywords:} Wright-Fisher model; forward Kolmogorov equation; random genetic drift; hierarchical solution

\section{Introduction}
The Wright-Fisher model \cite{fisher,wright1}, the basic model of population genetics, models  random genetic drift in a finite population of fixed size. It describes the evolution of the probabilities between non-overlapping generations  of several alleles, that is, competing alternatives for representation at a genetic locus. These probabilities  are obtained from random sampling in the parental generation. In more detail,  we have a population of finite size $N$ carrying initially $n+1$ different alleles, at a single locus in the basic version that we are concerned with here. For each member of a new generation -- the model works with discrete time steps --, randomly (with replacement) a mother is drawn from the previous generation who then donates her allele to that offspring. Eventually, all but one allele will get lost by random drift from the population, because it may happen that at some time, by chance no carrier of a particular allele is chosen as a mother for a member of the next generation. Then in that and all future generations, that particular allele will no longer be represented. Therefore, almost surely, asymptotically only a single allele will survive.
 For the mathematical analysis,  starting with the pioneering work of Kimura \cite{kimura1,kimura2,kimura3},  one goes to the diffusion approximation of the process, that is, rescales time as $t=\frac{1}{N}$ and lets $N\to \infty$. The evolution of the probability distribution of the alleles in the population is then described by the so-called  forward Kolmogorov equation
 \begin{align}\label{eq_Ln_pre}
\dd{}{t} u(x,t) = \half\sum_{i,j=1}^n\ddd{}{x^i}{x^j}\big(x^i(\delta^i_j-x^j)u(x,t)\big)	&\text{\quad in $(\D_n)_\infty=\D_n\times(0,\infty)$,}
 \end{align}
which is also known as the Fokker-Planck equation (\cf also section~\ref{sec_kolmo} for more details). The state space here is the $n$-dimensional probability simplex $\D_n$. That is, instead of the original model of a finite population evolving in discrete time steps, one rather considers the diffusion approximation for an infinite population in continuous time.

The basic model just described covers a single genetic locus only. Extensions to several loci are possible, as is the inclusion of  mutation, selection, or a spatial population structure, and this has driven research in  mathematical population genetics  (\cite{ewens,buerger}). The forward Kolmogorov equation is  a partial differential equation of parabolic type. There is another important PDE associated with this process,  the backward Kolmogorov equation
 \begin{align}\label{eq_Ln*_pre}
\dd{}{t} u(x,t) =   \half\sum_{i,j=1}^n\big(x^i(\delta^i_j-x^j)\big)\ddd{}{x^i}{x^j}u(x,t)	&\text{\quad in $(\D_n)_\infty=\D_n\times(0,\infty)$,}
 \end{align}
which is the  adjoint of the forward equation \wrt a suitable product (and with a boundary contribution whose analysis will be the main new tool of the present paper). 

The forward equations describes how an initial probability  distribution for the alleles in a population evolves in time. Since, as explained,  alleles can disappear from the population by random genetic drift and will then be lost forever (unless the possibility of mutations is included, which, however, we do not consider in this paper), one needs to dynamically connect  strata corresponding to different allele numbers. This is the main source of difficulties addressed in this paper, particularly as the operator in equation~\eqref{eq_Ln_pre} becomes degenerate towards the boundary of the domain. 

We note that we cannot prescribe boundary values for the \KFE \rf{eq_Ln_pre}. In terms of the process, the boundary plays a passive role as there is flux from the interior into the boundary, but not in the other direction. Therefore, the boundary values are determined by the evolution of the solution in the interior. In analytical terms, we have an exit boundary in the terminology of Feller\cite{feller}. 

The backward equation, in contrast, describes the evolution of the probabilities of the distributions of ancestral states giving rise to the present allele distribution. Solutions of an inhomogeneous  backward equation also yield formulas for expected loss of allele times. In the backward equation, the transition between strata is simpler, because the contributions from the various strata essentially add up. That is, the degeneracy  at the boundary is easier for the backward than for the forward equations, and there already exists a substantial literature deriving corresponding expansions for the solution of the backward equations, see for instance  \cite{LF1975,Shi1977,Gri1979,Gri1980,Shi1981,Tav1984,EG1993}. A powerful tool in this line of research has been Kingman's coalescent \cite{King1982}, that is, the method of tracing lines of descent back into the past and analyzing their merging patterns (for a quick introduction to that theory, see also \cite{jost_bio}). We shall describe some of these achievements in more detail below.

Here, as already mentioned, we analyze the more difficult boundary transitions for the forward equation. For this purpose, we shall also utilize the duality between the forward and the backward equation in an essential manner. Nevertheless, we should emphasize that the solutions of the forward equation are different from those of the backward equation and cannot be recovered from the latter. Therefore, the results about solutions of the backward equation  that we shall reference below do not directly apply to the forward equation. Therefore, we develop a different approach in this paper. 

Since the original work of Fisher, Wright and Kimura, the Wright-Fisher model as well as several extensions or generalizations of it have been investigated in considerable detail. One research line incorporated the model into the general theory of stochastic processes or partial differential equations  and derived existence and regularity results from that general perspective, see for instance \cite{ethier1,ethier2,ethier3,ethier4,karlin} for stochastic processes or \cite{epstein1,epstein2} for partial differential equations. A lot of research was also concerned with explicit results and formulas, for instance for the expected time of loss of an allele, and therefore, while relying on the abstract theory, had to work out the specific and explicit structure of the model.  We propose a third line of research on the Wright-Fisher model, based on geometric constructions, more specifically on information geometry, that is, the geometry of probability distributions, see \cite{amari,ajls}. 

In fact, the structure of the model is surprisingly rich, in particular when viewed from a geometric perspective. We are carrying out a systematic research project (see \cite{dat,julian,THJ1,THJ2,THJ3}) to develop this structure and thereby obtain an understanding of the Wright-Fisher model that is both deeper and more explicit, in the sense that precise formulae for the quantities of interest can be derived. The present paper, which is based on \cite{julian}, is a key component of this project.  

There are many solution schemes for the  \KFE\eqref{eq_Ln_pre} in the literature. As early as 1956, Kimura presented a local solution scheme for the 3-allelic case ($n=2$) in~\cite{kimura3}. Another approach by separation of variables was presented by Baxter, Blythe and McKane in~\cite{BBMcK}, this time for an arbitrary number of alleles. For the \KBE , the situation is even better. The starting point of much of the literature was the observation of Wright \cite{wright2} that when one includes mutation, the degeneracy at the boundary is removed. And when the probability of a mutation of allele $i$ into allele $j$ depends only on the target $j$, then the backward process possesses a unique stationary distribution, at least as long as those mutation rates are positive. This then lead to explicit representation formulas in \cite{LF1975,Shi1977,Gri1979,Gri1980,Shi1981,Tav1984,EG1993}. Some of these formulas, in particular those of \cite{Shi1981,EG1993} also pertain to the limit of vanishing mutation rates. In \cite{Shi1981}, a superposition of the contributions from the various strata was achieved whereas \cite{EG1993} could write down an explicit formula in terms of a Dirichlet distribution. As already mentioned, for the \KFE, the situation is more subtle because such a superposition for the contributions from the various strata no longer holds. The difference is the following. The backward equation produces the probability distribution of ancestral states giving rise to a current distribution. That latter distribution may involve states with different numbers of alleles present. Their ancestral distributions, however, do not interfere, regardless of the numbers of alleles they involve. Thus, some superposition principle holds, and the \KBE nicely extends to the boundary. In contrast, the \KFE yields the future distribution evolving from a current one. Here, however, the probability of some boundary state does not only depend on the flow within the corresponding boundary stratum, but also on the distribution in the interior, because at any time, there is some probability that an interior state loses some allele and turns into a boundary state. Thus, there is a continuous flux into the boundary strata from the interior. Therefore, the extension of the flow from the interior to the boundary strata is different from the intrinsic flows in those strata, and no superposition principle holds. 

Let us now describe in more specific terms what we achieve in this paper. The key is the degeneracy at the boundary of the Kolmogorov equations. While from an analytical perspective, this presents a profound difficulty for obtaining boundary regularity of the solutions of the equations, from a biological or geometric perspective, this is very natural because it corresponds to the loss by random drift of some alleles  from the population in finite time. And from a stochastic perspective, this has to happen almost surely. Now, however, even after an allele gets lost, the population keeps evolving by random genetic drift according to the Wright-Fisher scheme, simply with fewer alleles than before, until only one allele is left and the evolution comes to a halt (in this basic model). Therefore, it is biologically essential and geometrically natural to connect the processes before and after the loss of an allele. When the original process, starting with, say, $n+1$ alleles, takes place on an $n$-dimensional probability simplex, after the loss of an allele, we have a process on an $(n-1)$-dimensional simplex. The latter then should be identified as a facet of the former, that is, the loss of an allele simply means that the process moves from the interior into the boundary of the original simplex. Of course, this will then be repeated when further alleles get lost, and the process moves to lower and lower dimensional boundary strata until it gets stuck in a corner. In this paper, we therefore construct a global solution that incorporates and connects these successive loss of allele events, that is
\begin{thm*}[\cf~Theorem 4.12 on p.\,\pageref{thm_moments_n}]\label{thm_moments_n_pre}
For $n\in\N$ and a given  initial condition $f\in L^2(\D_n)$, the \KFE \eqref{eq_Ln_pre} corresponding to the diffusion approximation of the  $n$-dimen\-sional \WF model  possesses a unique {extended solution} 
$U\co{\big(\overline{\Delta}_{n}\big)}_{\infty}\map\R$,
which is defined on the entire closed simplex. 
\end{thm*}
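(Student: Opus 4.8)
The plan is to construct $U$ recursively, descending through the boundary strata of $\D_n$ from the open simplex down to the vertices, at each level feeding the boundary flux of the solution just obtained into the Kolmogorov equation on the next lower-dimensional face as an inhomogeneous forcing term. First I would solve the \KFE~\eqref{eq_Ln_pre} in the interior: the operator on the right of \eqref{eq_Ln_pre} is uniformly elliptic on compact subsets of the open simplex, so the existence and uniqueness theory developed in the preceding sections (spectral decomposition of the forward operator on $L^2(\D_n)$) yields a unique solution $u_n$ with $u_n(\cdot,0)=f$ on $(\D_n)_\infty$, which will serve as the restriction of $U$ to the top-dimensional stratum. The next and central task is to determine the mass this solution deposits on $\bd\D_n$. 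Because the backward operator in \eqref{eq_Ln*_pre} sends polynomials to polynomials of no higher degree, the moments of the full (extended) \WF process obey a finite, closed, linear system of ODEs that can be solved explicitly; comparing these with the moments $\int_{\D_n}x^\alpha u_n(x,t)\,dx$ of the interior part --- which strictly decay because probability escapes through the boundary --- already controls the \emph{total} flux into $\bd\D_n$ and its allocation among the facets.

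To obtain the flux as an actual density one localizes: an integration by parts of $u_n$ against solutions of the backward equation --- the duality identity, together with its boundary contribution, which is the new technical ingredient of the paper --- identifies, for each facet $F_k=\D_n\cap\{x^k=0\}$, the instantaneous flux density $\phi_k(\cdot,t)$ with which probability crosses from the interior into $F_k$. The delicate step here is to show, using the precise degeneracy rate $x^k(\delta^k_j-x^j)$ of the coefficients (equivalently, the square-root speed of the diffusion normal to $F_k$), that for $t>0$ each $\phi_k(\cdot,t)$ is an admissible datum on $F_k$, i.e.\ lies in $L^2(F_k)$ --- or in an appropriate space of measures at the deeper strata --- with the integrability in $t$ needed to run a Duhamel argument on $F_k$.

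With the flux in hand I would argue by downward induction on the dimension. Assume the theorem --- in the strengthened form allowing, besides an $L^2$ initial condition, an integrable-in-time source supported on the stratum --- holds for every \WF model of dimension $<n$. On each facet $F_k\cong\D_{n-1}$ I then solve the $(n-1)$-dimensional \KFE with zero initial condition and source $\phi_k$; by the inductive hypothesis this has a unique extended solution $U_k$ on $\cl{\D_{n-1}}$, whose own boundary fluxes are passed on to the codimension-two strata, and so on recursively. Setting $U$ equal to $u_n$ on the open simplex and to $U_k$ on the relative interior of $F_k$, and likewise on all lower strata, yields a candidate on $\cl{\D_n}$; I would then verify consistency --- a codimension-two face lies in two facets, and the two fluxes reaching it through those facets must add up to what is received there directly, which follows from the symmetry of the moment and flux bookkeeping --- together with conservation of total mass, $\int_{\cl{\D_n}}U(\cdot,t)=\int_{\D_n}f$ for all $t\ge0$, which fixes the normalization and confirms that the probability is exactly redistributed among the strata.

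For uniqueness I would take two extended solutions $U_1,U_2$ for the same $f$, pair their difference with solutions $v$ of the \KBE~\eqref{eq_Ln*_pre} that are smooth up to $\cl{\D_n}$ (whose existence up to the boundary is available from the backward-equation literature cited above), and apply the duality identity including its boundary term, stratum by stratum, to obtain $\int_{\cl{\D_n}}(U_1-U_2)(\cdot,t)\,v=0$ for a family of test functions $v$ rich enough to force $U_1=U_2$. The main obstacle is the flux analysis together with the inductive compatibility: one must show that the boundary flux extracted from the moment and duality computations has exactly the regularity and time-integrability required to be legitimate data for the next stratum, that this persists uniformly all the way down to the vertices, and that the pieces glue consistently across shared faces. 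Once the flux is controlled, the interior solvability, the Duhamel construction on each face, and the duality argument for uniqueness are comparatively routine.
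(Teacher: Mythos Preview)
Your proposal is essentially the paper's approach: hierarchical construction via boundary flux, Duhamel on each face, verification through the moment/weak formulation, and uniqueness by duality. Your inhomogeneous ``source $\phi_k$ with zero initial data'' on a facet is exactly the paper's Definition~\ref{dfi_allU}, which writes $U_{k,I_k}(x,t)=\int_0^t u_{k,I_k}^\tau(x,t-\tau)\,d\tau$ with $u_{k,I_k}^\tau$ the homogeneous solution started from the incoming flux at time~$\tau$.

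Two places where the paper's execution is tighter than your sketch. First, the flux regularity you flag as ``the delicate step'' is not delicate here: Proposition~\ref{prop_sol_n} already asserts that the interior solution and all its spatial derivatives extend continuously to $\bd\D_n$, so the normal flux is automatically smooth on each face and no separate $L^2$ estimate is needed. Second, rather than applying the $(n{-}1)$-dimensional theorem to each facet separately and then reconciling overlaps, the paper's Definition~\ref{dfi_allU} defines $U$ on a $k$-face directly as the Duhamel integral of the \emph{sum} of fluxes from all adjacent $(k{+}1)$-faces; this makes your consistency check at codimension-two faces unnecessary by construction. For uniqueness (Proposition~\ref{prop_weak_unique}) the paper does not pair with generic backward solutions but with the specific test functions $\psi=\omega_n\phi$ of Lemma~\ref{lem_ef-shift}, which are $L_n^*$-eigenfunctions \emph{vanishing on $\bd\D_n$}; then the hierarchical product $[\,\cdot\,,\psi]_n$ collapses to the single interior integral $(\,\cdot\,,\psi)_n$, one gets the scalar ODE $\frac{d}{dt}(U,\psi)_n=-\lambda(U,\psi)_n$, and agreement on $\D_n$ follows immediately from the initial condition; lower strata are then handled by downward induction using Lemma~\ref{lem_restr_bwd}. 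Your pairing with general smooth backward solutions would also work, but this eigenfunction trick is what makes the argument short.
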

The key to our solution concept is the evolution of the moments of the underlying process. These moments are global quantities, and their evolutions include what flows into the boundary. For instance, the total mass of the process remains 1. The moment evolution equations constitute an infinite family of ODEs. We have
 $\dd{}{t}\bar{\mu}_0(t)=0$ (preservation of the total mass) and  
\begin{align}\label{eq_moments_n-intro}
\dd{}{t}\bar{\mu}_\alpha(t)=-\frac{\abs{\alpha}(\abs{\alpha}-1)}{2}\bar{\mu}_\alpha(t)+\sum_{i=1}^n\frac{\alpha_i(\alpha_i-1)}{2}\bar{\mu}_{\alpha-e_i}(t) 
\end{align}
where the moment $\bar{\mu}_\alpha$ is the expectation value of $x^\alpha$, for $\alpha=(\alpha_1,\dotsc,\alpha_n)$, $\abs{\alpha}\geq 1$ ($e_i$ denotes the multi-index $(0,\dotsc,0,1,0,\dotsc,0)$ with 1  at the $i$\ord position). Our global solution respects and reflects all these moment evolutions, and is in turn determined by them. In that sense, the moment evolution equations \rf{eq_moments_n-intro} are equivalent to (our solution of) the \KFE .

In technical terms, this concept of (the extended) solution involves developing a hierarchical  scheme that relies on equations for the moments of the process, the interplay between the forward and the backward Kolmogorov equations, representations of the solutions in terms of Gegenbauer polynomials and on a careful analysis of the boundary flux. We should also point out  that while the existing literature draws upon concepts and results from the theory of stochastic processes in an essential way, our approach is analytical and geometric and therefore offers an alternative to the existing ones.  The processes for fewer alleles that occur in our hierarchical scheme combine the intrinsic dynamics with a reduced allele number with the contributions through loss-of-allele events from larger allele sets. This avoids any singularities. 

In geometric terms, we carefully investigate the boundary flux from a simplex into its various boundary faces. In analytical terms, we reduce the Kolmogorov type PDE to a family of ODEs, the moment evolution equations. Earlier research in that direction is due to Dawson-Hochberg \cite{dawson} and Dynkin \cite{dynkin}, within a more general and therefore less explicit scheme. In our approach, the moment equations are global and guarantee the consistency of the process beyond loss-of-allele events and across the various boundary faces, see Theorem \ref{thm_moments_n}. And in \cite{THJ1,THJ2,THJ3}, we have already constructed a global solution, which (implicitly) made  use of the hierarchical scheme presented here. 

From the PDE perspective \cite{epstein1,epstein2}, it might be of interest to see how a PDE with singular behavior at a singular boundary -- recall the lower-dimensional boundary strata where the boundary has corners -- can be explicitly solved by a hierarchical scheme. 

Extensions to several loci and the effects of genetic recombination, processes with mutations, presence of selective forces, etc. will be studied elsewhere. 

\section{Preliminaries and notation}\label{sec_simplex}

Since we plan to develop a hierarchical scheme for the solution of the Kolmogorov equations on the various boundary strata of the standard simplex, we need to develop some notation  for the recursive application of our scheme on different boundary strata. We also need suitable hierarchical products. That is the purpose of this section.

We consider relative frequencies $x^0,x^1,\dots ,x^n$ of alleles $0,1,\dots,n$. Thus we have the normalization $\sum_{j=0}^n x^j=1$, and we  therefore have $x^0=1-\sum_{i=1}^n x^i$. We shall therefore work with the 
 (open) \textit{$n$-dimen\-sional standard orthogonal simplex}\sindex[not]{Dn@$\D_n$} 
\begin{align}\label{eq_stand_simpl}
\D_n\ce\Bigset{{(x^1,\dotsc,x^n)\in\R^n\big\vert x^i > 0\text{ for $i=1,\dotsc,n$ and }\sum_{i=1}^n x^i < 1}},
\end{align}
or equivalently,
\begin{align}
\D_n =\Bigset{{(x^1,\dotsc,x^n)\in\R^n\big\vert x^j > 0\text{ for }j=0,1,\dotsc,n \text{ and }\sum_{j=0}^n x^j=1}}.
\end{align}
Its topological closure is 
\begin{align}\label{eq_simpl_I_n}
\cl{\Delta}_{n}=\bigset{{(x^1,\dotsc,x^n)\in\R^n\big\vert x^i \ge 0 \text{ for }i=1,\dotsc,n \text{ and }\sum_{i=1}^n x^i \le 1}}.
\end{align}
In order to include time $t\in [0,\infty)$, we shall also use the notation
\begin{equation*}
  (\D_n)_\infty:=\D_n\times(0,\infty). 
\end{equation*}

The boundary $\bd\D_n=\cl{\D}_n\smin\D_n$ consists of various subsimplices of descending dimensions called \textit{faces}, starting from the $(n-1)$-dimen\-sional \textit{facets} down to the \textit{vertices} (which represent 0-dimen\-sional faces). Each subsimplex of dimension $k\leq n-1$ is isomorphic to the $k$-dimen\-sional standard orthogonal simplex $\D_k$. For an index set $I_k=\{i_0,i_1,\dots ,i_k\}\subset \set{0,\dotsc,n}$ with $i_j\neq i_l$ for $j\neq l$ 
we put 
\begin{align} 
\Delta_{k}^{(I_{k})}\ce\Bigset{{(x^1,\dotsc,x^n)\in{\overline{\Delta}_n}\big\vert x^i > 0\text{ for $i\in I_k$; }x^i=0\text{ for $i\in I_n\smin I_k$}}}.
\end{align}
We note that $\D_n =\Delta_{n}^{(I_{n})}$.

For a given $k\leq n-1$, there are of course $\binom{n+1}{k+1}$ different such subsets $I_k$ of $I_n$, each of which corresponds to a certain boundary face $\D_k^{(I_k)}$. We therefore introduce the \textit{$k$-dimen\-sional boundary $\bd_k\D_n$ of $\D_n$}\sindex[not]{dkDn@$\bd_k\D_n$} by putting
\begin{align}\label{eq_bd_k}
\bd_k\D_n^{(I_{n})}\ce \bigcup_{I_k\subset I_n}\Delta_k^{(I_k)}\subset \bd\D_n^{(I_{n})}\quad\text{for $0\leq k\leq n-1$}.
\end{align}
With this notation, we have  $\bd_n\D_n =\D_n$, although this is not a boundary component. The concept of the $k$-dimen\-sional boundary also applies to simplices which are themselves boundary instances of some $\D_l^{(I_l)}$, $I_l\subset I_n$ for $0\leq k < l\leq n$, thus
\begin{align}
\bd_k\D_l^{(I_l)}=\bigcup_{I_k\subset I_l}\Delta_k^{(I_k)}\subset \bd\D_l^{(I_l)}.
\end{align}

In the \WF model,  
 $\D_n$ corresponds to the state where all $n+1$ alleles are present, whereas $\bd_k\D_n$ represents the state where exactly (any) $k+1$ alleles are present in the population. An individual $\D_k^{(\set{i_0,\dotsc,i_{k}})}$ in $\bd_k\D_n$ corresponds to the state where exactly the alleles $i_0,\dotsc,i_{k}$ are present in the population. Likewise, $\bd_{k-1}\D_k^{(\set{i_0,\dotsc,i_{k}})}$ corresponds to the state where exactly one further allele out of $i_0,\dotsc,i_{k}$ is eliminated from the population.\\

In order to define integral  products on $\D_n$ and its faces, 
\sindex[not]{L2UD@$L^2\Big(\bigcup_{k=0}^n\bd_k\D_n\Big)$} we need appropriate spaces of square integrable functions. Thus, 
\begin{multline}\label{eq_dfi_L2_union}
L^2\Big(\bigcup_{k=0}^n\bd_k\D_n\Big)
\ce\Big\{f\co\cl{\D}_n\too\R\,\Big\vert\,\text{$f\vert_{\bd_k\D_n}$ is $\leb_k$-measurable and}\\\text{$\int_{\bd_k\D_n} \abs{f(x)}^2\,\leb_k(dx) < \infty$ for all $k=0,\dotsc,n$}\Big\}.
\end{multline}
Here, $\leb_k$ is the  $k$-dimensional Lebesgue measure, but when integrating over some ${\Delta_k^{(I_k)}}$ with $0\notin I_k$, the measure needs to be replaced with the one induced on ${\Delta_k^{(I_k)}}$ by the Lebesgue measure of the containing $\R^{k+1}$. That  measure, however, will still be denoted by $\leb_k$\label{pag_leb_k} as it is clear from the domain of integration ${\Delta_k^{(I_k)}}$ with either $0\in I_k$ or $0\notin I_k$ which version is actually used. In particular,  for the top-dimen\-sional simplex, we simply have 
\sindex[not]{L2Dn@$L^2(\D_n)$}
\begin{align}
L^2(\D_n)\ce\Big\{f\co\D_n\too\R\,\Big\vert\,\text{$f$ is $\leb_n$-measurable and $\int_{\D_n}\abs{f(x)}^2\,\leb_n(dx) < \infty$}\Big\}.
\end{align}
Furthermore,  we also define for $k\in\N\cup\set{\infty}$
\sindex[not]{C0@$C_0^k(\cl{\Delta}_n)$}\sindex[not]{Cc@$C_c^k(\cl{\Delta}_{n})$}
\begin{align}
C_0^k(\cl{\Delta}_{n})&\ce\bigset{f\in C^k(\cl{\Delta}_{n})\big|f\vert_{\bd\D_n}=0},\\
C_0^k({\Delta}_{n})&\ce\bigset{f\in C^k({\Delta}_{n})
\big|\exists\,\bar{f}\in C_0^k(\cl{\Delta}_{n})\text{ with }\bar{f}\vert_{\D_n}=f}\\[-0.7em]
\intertext{as well as}\notag\\[-2.7em]
C_c^k(\cl{\Delta}_{n})&\ce\bigset{f\in C^k(\cl{\Delta}_{n})\big|\supp(f)\subsetneq{\Delta}_{n}}\label{eq_def_Cc}.
\end{align}

We can now  introduce a \textit{product} of functions $u,v\in L^2(\D_n)$ by\sindex[not]{<@$(\dcd)_n$}
\begin{align}\label{eq_prod}
(u, v)_n\ce \intlim_{{\Delta}_n} u(x)v(x)\,\leb_n(dx).
\end{align}
Importantly, we integrate here  only over the interior $\D_n$; the index -- if no confusion is to be expected -- may be omitted. As~$n$ was arbitrary, the product may also be recursively applied on some ${\Delta_k^{(I_k)}}\subset\bd\D_n$.

Utilising the various products $(\dcd)_k$ on ${\bd_k\Delta_n^{(I_n)}}$ for $k=0,\dotsc,n$, we can now  define a \textit{hierarchical product} on the closure of the simplex $\overline{\Delta}_{n}$. For functions $u,v\co\overline{\Delta}_{n}\too\R$ with $u,v\vert_{{\bd_k\Delta_n^{(I_n)}}}\in L^2\big(\bd_k\Delta_n^{(I_n)}\big)$ 
for all $k=0,\dotsc,n$, we put\sindex[not]{[,]@$[\dcd]_n$}
\begin{align}\label{eq_prod_[]}
[u,v]_n\ce \sum_{k=0}^n (u,v)_k
\end{align}
with $(u,v)_k$  denoting the integral over the full $k$-dimen\-sional boundary $\bd_k\D_n$ of $\D_n$ (\cf  equation~\eqref{eq_bd_k}), thus 
\begin{align}\label{eq_prod_delta_k}
[u,v]_n=\sum_{k=0}^n (u,v)_k=\sum_{k=0}^n\, \intlim_{\bd_k\Delta_n} u(x)v(x)\,\leb_k(dx)
=\sum_{k=0}^n\, \sum_{{I_k\subset I_n}} \intlim_{\Delta_k^{(I_k)}} u(x)v(x)\,\leb_k(dx)\quad ;
\end{align}
here, $\leb_k$ again denotes either the Lebesgue measure of $\R^k$ or -- if the domain of integration is some ${\Delta_k^{(I_k)}}$ with $0\notin I_k$ -- the measure induced on ${\Delta_k^{(I_k)}}$ by the Lebesgue measure of the containing $\R^{k+1}$. 

\section{The Kolmogorov equations}\label{sec_kolmo}

The \textit{\KFE} for the diffusion approximation of the $n$-allelic 1-locus \WF model reads \sindex[not]{Dninfty@$(\D_n)_\infty$}\sindex[not]{u(x,t)@$u(x,t)$}
\begin{equation}\label{eq_Ln}
\begin{cases}
\dd{}{t} u(x,t) = L_n u(x,t)	&\text{in $(\D_n)_\infty=\D_n\times(0,\infty)$}\\
u (x,0) = f(x)			&\text{in $\D_n$, $f\in L^2(\D_n)$}\\
\end{cases}
\end{equation}
for $u(\,\cdot\,,t)\in C^2(\D_n)$ for each fixed $t\in(0,\infty)$ and $u(x,\,\cdot\,)\in C^1((0,\infty))$ for each fixed $x\in\D_n$ and with
\begin{align}\label{eq_Ln_def}
 L_n u(x,t) \ce \half\sum_{i,j=1}^n\ddd{}{x^i}{x^j}\big(x^i(\delta^i_j-x^j)u(x,t)\big)
\end{align}
being the \textit{forward operator}\sindex[not]{Ln@$L_n$}. Analogously, we have the \textit{backward operator}\sindex[not]{Ln*@$L_n^*$}
\begin{align}\label{eq_Ln*_def}
L_n^* u(x,t) \ce \half\sum_{i,j=1}^n\big(x^i(\delta^i_j-x^j)\big)\ddd{}{x^i}{x^j}u(x,t),
\end{align}
appearing in the corresponding \KBE. The definitions of the operators given in equations~\eqref{eq_Ln_def} and~\eqref{eq_Ln*_def} also apply to the closure $\cl{\D}_n$, and in fact, we shall also consider extensions of the solution and the differential equation to the boundary. However,  the operators become degenerate at the boundary. In fact, on the boundary, the corresponding entries in the coefficient matrix $(x^i(\delta^i_j-x^j))_{ij}$ vanish, thus the operators are not uniformly elliptic on $\D_n$. 

Later, we will also use a \textit{weak formulation} of the \KFE
\begin{align}\label{eq_weak_pre}
\bigg[\dd{}{t}{U(t)},\phi\bigg]_n=\big[{U(t)},L_n^*\phi\big]_n\quad \text{for $\phi\in C^\infty(\cl{\D}_n)$ and all $t\in(0,\infty)$}.
\end{align}

\subsection*{Properties and eigenfunctions}

For relations between the two operators, we immediately have the following two lemmas:
\begin{lem}\label{lem_adjoint}
$L_n$ and $L^*_n$ are (formal) adjoints with respect to the product $(\dcd)_n$ in the sense that
\begin{align}
(L_n u, \phi)_{n}= (u,L^*_n \phi)_n\quad\text{for $u\in C^2(\cl{\Delta}_n)$, $\phi\in C^2_0(\cl{\Delta}_n)$.}
\end{align}
\end{lem}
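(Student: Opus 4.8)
The plan is to prove the identity by integrating by parts twice on $\D_n$ and checking that every boundary contribution vanishes. Write $a^{ij}(x)\ce x^i(\delta^i_j-x^j)$ for the coefficients, so that $L_n u=\half\sum_{i,j=1}^n\ddd{}{x^i}{x^j}(a^{ij}u)$ and $L_n^*\phi=\half\sum_{i,j=1}^n a^{ij}\,\ddd{}{x^i}{x^j}\phi$. Since $u\in C^2(\cl{\D}_n)$, each product $a^{ij}u$ is $C^2$ up to the boundary, so the divergence theorem is applicable on $\D_n$. Applying it first to $\sum_i\dd{}{x^i}\big(\sum_j\dd{}{x^j}(a^{ij}u)\big)$ moves one derivative onto $\phi$; the resulting boundary integral carries a factor $\phi$ and vanishes because $\phi\in C_0^2(\cl{\D}_n)$ is zero on $\bd\D_n$. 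A second application moves the remaining derivative onto $\phi$ and produces the term $\half\sum_{i,j}\intlim_{\D_n}a^{ij}u\,\ddd{}{x^i}{x^j}\phi=(u,L_n^*\phi)_n$, together with a boundary integral $-\half\intlim_{\bd\D_n}\sum_{i,j}a^{ij}u\,\dd{\phi}{x^i}\,\nu_j$ over the facets, where $\nu$ denotes the outer unit normal.

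The core of the argument is that this last boundary integral is zero, which is a purely algebraic property of the matrix $\big(a^{ij}\big)$. On a facet $\{x^k=0\}$ with $k\in\set{1,\dotsc,n}$ the outer normal points in the $-e_k$ direction, so the integrand reduces to a multiple of $\sum_i a^{ik}u\,\dd{\phi}{x^i}$; but $a^{ik}=x^i(\delta^i_k-x^k)$ vanishes identically on $\{x^k=0\}$ -- for $i\neq k$ because of the factor $\delta^i_k$, for $i=k$ because of the factor $x^i=x^k$. On the remaining facet $\{x^0=0\}=\set{\sum_{i=1}^n x^i=1}$ the outer normal is proportional to $(1,\dotsc,1)$, so the integrand is proportional to $\sum_i\big(\sum_j a^{ij}\big)u\,\dd{\phi}{x^i}$, and $\sum_{j=1}^n a^{ij}=x^i\big(1-\sum_{j=1}^n x^j\big)=x^i x^0$ vanishes on $\{x^0=0\}$. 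The lower-dimensional faces (edges down to vertices) carry no $\leb_{n-1}$-mass in $\bd\D_n$ and may be discarded. Hence the boundary term drops out and $(L_n u,\phi)_n=(u,L_n^*\phi)_n$, as claimed.

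The main obstacle is purely bookkeeping: carrying out the two integrations by parts cleanly in the nonsymmetric double sum and verifying the two algebraic identities $a^{ik}\vert_{\{x^k=0\}}=0$ and $\sum_j a^{ij}=x^ix^0$; no analytic subtlety arises since $u$ and $\phi$ are $C^2$ up to $\cl{\D}_n$. It is worth recording that only $\phi\vert_{\bd\D_n}=0$ is actually used, not the full strength of $C_0^2(\cl{\D}_n)$, which matches the stated hypothesis and foreshadows why, in the weak formulation \eqref{eq_weak_pre}, genuine boundary flux terms will reappear precisely when $\phi$ does not vanish on $\bd\D_n$.
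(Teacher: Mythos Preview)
Your proof is correct and, in substance, is exactly what the paper does: the paper simply invokes Proposition~\ref{prop_mgf}, whose proof carries out the very same two integrations by parts and the very same algebraic check that $\sum_j a^{ij}\nu_j$ vanishes on every facet of $\D_n$. The only structural difference is that the paper first proves the identity \eqref{eq_mgf} for \emph{general} $\phi\in C^2(\cl{\D}_n)$, retaining the flux boundary term $-\int_{\bd_{n-1}\D_n}\phi\,G_u\cdot\nu$, and then obtains Lemma~\ref{lem_adjoint} as the special case $\phi\vert_{\bd\D_n}=0$; you instead discard that first boundary integral immediately and only check the second one. Your closing remark already anticipates this: the flux term you kill with $\phi\vert_{\bd\D_n}=0$ is precisely the one that the paper keeps in Proposition~\ref{prop_mgf} and that drives the whole hierarchical scheme.
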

\begin{proof}
The assertion directly follows from proposition~\ref{prop_mgf} below.
\end{proof}

\begin{lem}\label{lem_ef-shift}\sindex[not]{on@$\omega_n$}
For an eigenfunction $\phi\in C^2(\cl{\Delta}_n)$ of $L_n$ and $\omega_n\ce\prod^n_{k=1}x^k\big(1-\sum_{l=1}^n x^l\big)$, we have: $\omega_n\phi\in C^2_0(\cl{\Delta}_n)$ is an eigenfunction of $L^*_n$ corresponding to the same eigenvalue and conversely.
\end{lem}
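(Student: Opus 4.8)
The plan is to reduce the statement to the single operator identity
\[
L_n^*(\omega_n\phi)=\omega_n\,L_n\phi\qquad\text{for every }\phi\in C^2(\cl{\Delta}_n),
\]
writing here $\bd_i$ for $\bd/\bd x^i$. Granting this, the lemma is immediate. Since $\omega_n=\prod_{k=0}^{n}x^k$ (with $x^0\ce 1-\sum_{l=1}^{n}x^l$) is a polynomial vanishing on $\bd\D_n$, the product $\omega_n\phi$ lies in $C_0^2(\cl{\Delta}_n)$; and if $L_n\phi=\lambda\phi$, then $L_n^*(\omega_n\phi)=\omega_n\lambda\phi=\lambda(\omega_n\phi)$. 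Conversely, if $\phi\in C^2(\cl{\Delta}_n)$ and $L_n^*(\omega_n\phi)=\lambda(\omega_n\phi)$, the identity gives $\omega_n\,(L_n\phi-\lambda\phi)=0$ on $\D_n$; as $\omega_n>0$ there and both sides extend continuously to $\cl{\Delta}_n$, we conclude $L_n\phi=\lambda\phi$ on all of $\cl{\Delta}_n$.

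To prove the identity, abbreviate $a^{ij}\ce x^i(\delta^i_j-x^j)=a^{ji}$. Expanding $L_n^*(\omega_n\phi)=\tfrac12\sum_{i,j}a^{ij}\bd_i\bd_j(\omega_n\phi)$ by the Leibniz rule, and collapsing the cross term using the symmetry of $a$, gives
\[
L_n^*(\omega_n\phi)=\omega_n\,L_n^*\phi+\sum_{j=1}^{n}\Big(\sum_{i=1}^{n}a^{ij}\,\bd_i\omega_n\Big)\bd_j\phi+\phi\,L_n^*\omega_n .
\]
Expanding $L_n\phi=\tfrac12\sum_{i,j}\bd_i\bd_j(a^{ij}\phi)$ the same way (again using the symmetry of $a$) gives
\[
L_n\phi=L_n^*\phi+\sum_{j=1}^{n}b^j\,\bd_j\phi+c\,\phi,\qquad b^j\ce\sum_{i=1}^{n}\bd_i a^{ij},\quad c\ce\tfrac12\sum_{i,j}\bd_i\bd_j a^{ij},
\]
and a one-line computation from $a^{ij}=x^i\delta^i_j-x^ix^j$ yields $b^j=1-(n+1)x^j$ and $c=-\tfrac{n(n+1)}{2}$. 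Comparing the two displays term by term, the desired identity is equivalent to the relations $\sum_{i}a^{ij}\bd_i\omega_n=b^j\omega_n$ for $j=1,\dots,n$ together with $L_n^*\omega_n=c\,\omega_n$; but the latter follows from the former by summing its $\bd_j$-derivatives and invoking the symmetry of $a$ once more (equivalently, it is the case $\phi\equiv1$), so it suffices to establish the first-order relations.

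For these, observe that on $\D_n$ one has $\bd_i\omega_n=\omega_n\,\bd_i\log\omega_n=\omega_n\big(\tfrac{1}{x^i}-\tfrac{1}{x^0}\big)$, whence
\[
\sum_{i=1}^{n}a^{ij}\bd_i\omega_n=\omega_n\sum_{i=1}^{n}(\delta^i_j-x^j)\Big(1-\frac{x^i}{x^0}\Big)
=\omega_n\Big[(1-nx^j)-\frac{1}{x^0}\,x^jx^0\Big]=\big(1-(n+1)x^j\big)\,\omega_n ,
\]
using $\sum_i(\delta^i_j-x^j)=1-nx^j$ and $\sum_i x^i(\delta^i_j-x^j)=x^j\big(1-\sum_i x^i\big)=x^jx^0$. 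Both sides are polynomials, so the relation holds on $\cl{\Delta}_n$, which finishes the reduction and hence the proof.

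The underlying computation is entirely routine; the only content that is not mechanical is the first-order relation $\sum_i a^{ij}\bd_i\omega_n=(1-(n+1)x^j)\,\omega_n$, which is exactly what encodes that the particular weight $\omega_n$ — the product of all $n+1$ barycentric coordinates, rather than some other boundary-vanishing function — makes the first-order part of $L_n^*(\omega_n\,\cdot\,)$ reproduce the drift of $L_n$. One caveat on the converse: as used above it starts from $\phi$ already known to lie in $C^2(\cl{\Delta}_n)$; the sharper assertion that \emph{every} eigenfunction of $L_n^*$ in $C_0^2(\cl{\Delta}_n)$ is of the form $\omega_n\phi$ with $\phi\in C^2(\cl{\Delta}_n)$ is a boundary-regularity statement which, if needed, is read off from the explicit Gegenbauer-polynomial representation of the eigenfunctions rather than from the identity alone.
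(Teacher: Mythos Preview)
Your proof is correct and follows essentially the same route as the paper: reduce to the operator identity $L_n^*(\omega_n\phi)=\omega_n L_n\phi$, expand both sides by Leibniz, and match terms to arrive at the two conditions $\sum_i a^{ij}\bd_i\omega_n=(1-(n+1)x^j)\omega_n$ and $L_n^*\omega_n=-\tfrac{n(n+1)}{2}\omega_n$. You go a bit further than the paper, which simply states that these are ``easily verified by direct computation'': you actually carry out the first-order check via the logarithmic derivative $\bd_i\log\omega_n=\tfrac{1}{x^i}-\tfrac{1}{x^0}$, and you observe that the second-order condition follows from the first by differentiating and summing --- a nice economy the paper does not mention. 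Your closing caveat about the scope of the converse is also apt.
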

\begin{proof}
Looking for a function $\omega_n$ with $L_n^*(\omega_n u)=\omega_n L_n (u)$ (and hence for $L_n$-eigenfunctions $\phi$ consequently $L_n^*(\omega_n\phi)=\omega_n L_n (\phi)=-\lambda\omega_n\phi$),
we have on the one hand 
\begin{align}
L_n u=-\frac{n(n+1)}{2}u+\sum_{i}(1-(n+1)x^i)\dd{}{x^i}u+\half\sum_{i,j}x^i(\delta^i_j-x^j)\dd{}{x^i}\dd{}{x^j} u
\end{align}
and on the other hand
\begin{align}\notag
L_n^*(\omega_n u)
&=\half\sum_{i,j}x^i(\delta^i_j-x^j)\dd{}{x^i}\dd{}{x^j} (\omega_n u)\\
&=\half\sum_{i,j}x^i(\delta^i_j-x^j)
\Big(\Big(\dd{}{x^i}\dd{}{x^j} \omega_n\Big) u+2\dd{}{x^j}\omega_n\dd{}{x^i}u+\omega_n\dd{}{x^i}\dd{}{x^j}u\Big).
\end{align}
Thus, it suffices that  such a function $\omega_n$ satisfies
\begin{align}
\begin{cases}
\sum_{i,j}x^i(\delta^i_j-x^j)\dd{}{x^i}\dd{}{x^j} \omega_n=-{n(n+1)}\omega_n\\
\sum_{j}x^i(\delta^i_j-x^j)\dd{}{x^j}\omega_n=(1-(n+1)x^i)\omega_n\quad\text{for all~$i$},
\end{cases}
\end{align}
which is the case for $\omega_n=\prod^n_{k=1}x^k\big(1-\sum_{l=1}^n x^l\big)$ as may easily be verified by direct computation.
\end{proof}

For our hierarchical scheme,  it will be important that the operator $L_n^*$, if restricted to subsimplices $\D_k^{(I_{k})}\cong\D_k$ in $\bd{\Delta^{(I_n)}_n}$ of any dimension~$k$,  is the adjoint of the differential operator $L_k$ corresponding to the evolution of a $(k+1)$-allelic process in $\Delta_k$:

\begin{lem}\label{lem_restr_bwd}
For $0\leq k < n$ and $I_k\subset\set{0,\dotsc,n}$, $\abs{I_k}=k$, we have
\begin{align}
L_n^*\big\vert_{\D_k^{(I_{k})}}=L_k^*.
\end{align}
\end{lem}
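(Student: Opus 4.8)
The plan is to unwind the definitions on both sides and observe that the restriction of the second-order coefficient matrix of $L_n^*$ to a subsimplex is exactly the coefficient matrix of $L_k^*$ on the corresponding standard simplex $\D_k$. Concretely, fix $I_k=\{i_1,\dotsc,i_k\}\subset\set{0,\dotsc,n}$ and let $I_n\smin I_k=\set{j_1,\dotsc,j_{n-k}}$ be the complementary indices (I allow $0\in I_k$ or $0\notin I_k$). On the face $\D_k^{(I_k)}$ we have $x^j=0$ for every $j\in I_n\smin I_k$, so $\D_k^{(I_k)}$ is naturally coordinatised by $(x^i)_{i\in I_k}$, and these satisfy the constraints $x^i>0$, $\sum_{i\in I_k}x^i<1$ (when $0\notin I_k$) or $\sum_{i\in I_k\smin\set0}x^i<1$ (when $0\in I_k$) --- in either case this is an affine copy of $\D_k$.

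The key step is then a coefficient comparison. Since $L_n^*$ involves no zeroth- or first-order terms --- it is purely $\half\sum_{i,j}x^i(\delta^i_j-x^j)\partial_i\partial_j$ --- applying it to a function $\phi$ that depends only on the face coordinates kills every derivative $\partial_j$ with $j\notin I_k$, so only the block of indices $i,j\in I_k$ survives. On that block the coefficient is $x^i(\delta^i_j-x^j)$ with $i,j\in I_k$, evaluated at points of the face. If $0\notin I_k$, this is literally the coefficient matrix appearing in $L_k^*$ written in the coordinates $(x^i)_{i\in I_k}$, and we are done. If $0\in I_k$, one has to use the identification of $\D_k^{(I_k)}$ with the standard $\D_k$ that sends the ``$0$-th'' variable of $\D_k$ to the index in $I_k\smin\set0$ that plays the role of the dependent coordinate (recall from section~\ref{sec_simplex} that in $\D_k$ the frequency $x^0=1-\sum x^i$ is the eliminated one); one then checks that the quadratic form $\sum_{i,j\in I_k}x^i(\delta^i_j-x^j)\partial_i\partial_j$ is invariant under relabelling which index is treated as dependent, because the form $x^a(\delta^a_b-x^b)$ is symmetric in the roles of the $n+1$ homogeneous frequencies $x^0,\dotsc,x^n$. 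This symmetry is most transparently seen by writing the generator in the homogeneous coordinates $(x^0,\dotsc,x^n)$ on $\set{\sum x^j=1}$, where it is manifestly $\mathrm{Sym}(n+1)$-equivariant, and then restricting.

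The main obstacle --- really the only subtlety --- is this bookkeeping when $0\in I_k$: one must be careful that $L_k^*$ on the \emph{abstract} $\D_k$ is defined using a specific choice of dependent variable (the $0$-th), whereas the face $\D_k^{(I_k)}$ may have a different index in the dependent role, so the identification $\D_k^{(I_k)}\cong\D_k$ is not just ``forget the zero coordinates'' but also possibly a permutation of coordinates. One should state explicitly which isomorphism is meant and verify the generator transforms correctly under it; the computation itself is a short symmetric-function check and I would not grind through it. Note also that the analogous statement for the \emph{forward} operator $L_n$ would be false (it picks up boundary flux / lower-order terms under restriction), which is exactly why Lemma~\ref{lem_restr_bwd} is stated only for $L_n^*$ and is the technically convenient half of the duality exploited later.
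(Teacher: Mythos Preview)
Your proof is correct and follows essentially the same approach as the paper's: a direct coefficient comparison for the case $0\notin I_k$, followed by an appeal to the permutation symmetry of the generator for the case $0\in I_k$. The paper dispatches the symmetric case in a single phrase (``By symmetry, this then also holds for $I_k$ with $0\in I_k$''), whereas you spell out more carefully which identification $\D_k^{(I_k)}\cong\D_k$ is meant and why the quadratic form respects it --- a worthwhile elaboration of what the paper leaves implicit.

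One small remark: for the case $0\notin I_k$ you argue that derivatives $\partial_j$ with $j\notin I_k$ vanish on functions depending only on the face coordinates. The paper instead observes directly that the \emph{coefficients} $x^i(\delta^i_j-x^j)$ vanish on the face whenever $i\notin I_k$ or $j\notin I_k$, which is slightly stronger (it shows $(L_n^*\phi)\vert_{\D_k^{(I_k)}}$ depends only on $\phi\vert_{\D_k^{(I_k)}}$ for \emph{any} smooth $\phi$ on $\cl{\D}_n$, not just face-constant ones). Both readings are legitimate interpretations of the restriction statement and lead to the same conclusion.
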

\begin{proof}
For $I_k\subset\set{1,\dotsc,n}$, $\abs{I_k}=k$, we directly have:
\begin{align}\notag
L_n^*\big\vert_{\D_k^{(I_{k})}}
&=\half\sum_{i,j=1}^n\big(x^i(\delta^i_j-x^j)\big)\ddd{}{x^i}{x^j}\Big\vert_{\D_k^{(I_{k})}}\\
&=\half\sum_{i,j\in I_k}\big(x^{i}(\delta^{i}_{j}-x^{j})\big)\ddd{}{x^{i}}{x^{j}}
\equiv L_k^*.
\end{align}
By symmetry, this then  also holds for $I_k$ with $0\in I_k$, hence for arbitrary $I_k$. 
\end{proof}
We may therefore omit the index~$k$ in $L_k^*$ whenever convenient, in particular when considering domains where (parts of) the boundary are included. For the  operator $L_n$, in contrast, we do not have such a restriction property\label{pag_op_restr_fwd}.

The starting point of our solution scheme will be the  solution constructed in \cite{THJ2}. That solution depends on
\begin{prop}\label{prop_Gegenb_n}
For $n\in\Np$ and each multi-index $\alpha=(\alpha^1,\dotsc, \alpha^n)$ with $|\alpha|=\alpha^1+\cdots+\alpha^n=l\geq0$,
\begin{align}
C_{l,\alpha}(x)\ce x^{\alpha}+\sum_{|\beta|=0}^{l-1}a_{l,\beta} x^{\beta},\quad x\in\D_n
\end{align}
with $a_{l,\beta}$ inductively defined by $a_{l,\beta}\ce \delta^\alpha_\beta$ for $\abs{\beta}=l$ and
\begin{align}
a_{l,\beta}\ce-\frac{\sum_{i=1}^n (\beta^i+2)(\beta^i+1)a_{l,(\beta^1,\dotsc,\beta^i+1,\dotsc,\beta^n)}}{(l-|\beta|)(l+|\beta|+2n+1)}\quad\text{for all $0\leq|\beta|\leq l-1$},
\end{align}
is an eigenfunction of $L_n$ in $\cl{\D}_n$ corresponding to the eigenvalue $\lambda_l^{(n)}=\frac{(n+l)(n+l+1)}{2}$.
\end{prop}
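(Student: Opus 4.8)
The plan is to work with the non-divergence form of $L_n$ already recorded in the proof of Lemma~\ref{lem_ef-shift},
\begin{align*}
L_n u=-\frac{n(n+1)}{2}u+\sum_{i=1}^n\big(1-(n+1)x^i\big)\dd{}{x^i}u+\half\sum_{i,j=1}^n x^i(\delta^i_j-x^j)\dd{}{x^i}\dd{}{x^j}u,
\end{align*}
and to apply it to a single monomial $x^\beta$, $|\beta|=m$. The first two terms contribute $-\tfrac{n(n+1)}{2}x^\beta$ and $\sum_i\beta^i x^{\beta-e_i}-(n+1)m\,x^\beta$. In the second-order part one separates the diagonal, $\half\sum_i x^i\ddsq{}{x^i}x^\beta=\half\sum_i\beta^i(\beta^i-1)\,x^{\beta-e_i}$, and treats the remainder by Euler's identity: since $\sum_j x^j\partial_j$ acts as multiplication by $m$ on homogeneous degree-$m$ polynomials and $\sum_{i,j}x^ix^j\partial_i\partial_j=(\sum_i x^i\partial_i)^2-\sum_i x^i\partial_i$, one gets $-\half\sum_{i,j}x^ix^j\partial_i\partial_j x^\beta=-\half\,m(m-1)\,x^\beta$. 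Collecting the $x^\beta$-coefficients gives $-\tfrac12\big(n(n+1)+2(n+1)m+m(m-1)\big)=-\tfrac12(n+m)(n+m+1)=-\lambda^{(n)}_m$, and collecting the $x^{\beta-e_i}$-coefficients gives $\beta^i+\tfrac12\beta^i(\beta^i-1)=\tfrac{\beta^i(\beta^i+1)}{2}$. Hence
\begin{align*}
L_n x^\beta=-\lambda^{(n)}_{|\beta|}\,x^\beta+\sum_{i=1}^n\frac{\beta^i(\beta^i+1)}{2}\,x^{\beta-e_i}.
\end{align*}

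Next I would substitute this into $C_{l,\alpha}=\sum_{|\beta|\le l}a_{l,\beta}x^\beta$ and, by linearity, equate coefficients of each monomial $x^\gamma$ in the desired identity $L_n C_{l,\alpha}=-\lambda^{(n)}_l C_{l,\alpha}$ (an eigenfunction "with eigenvalue $\lambda^{(n)}_l$" being understood in the sign convention of Lemma~\ref{lem_ef-shift}). The monomial $x^\gamma$ receives $-\lambda^{(n)}_{|\gamma|}a_{l,\gamma}$ from the diagonal part and $\sum_i\tfrac{(\gamma^i+1)(\gamma^i+2)}{2}a_{l,\gamma+e_i}$ from the shift terms (the index $\beta=\gamma+e_i$), so the identity is equivalent to
\begin{align*}
\big(\lambda^{(n)}_l-\lambda^{(n)}_{|\gamma|}\big)\,a_{l,\gamma}=-\sum_{i=1}^n\frac{(\gamma^i+1)(\gamma^i+2)}{2}\,a_{l,\gamma+e_i}\qquad\text{for all }|\gamma|\le l.
\end{align*}
For $|\gamma|=l$ both sides vanish ($a_{l,\gamma+e_i}=0$ since $|\gamma+e_i|>l$, and $a_{l,\gamma}=\delta^\alpha_\gamma$), so this holds automatically; for $|\gamma|<l$ one uses the elementary factorisation $\lambda^{(n)}_l-\lambda^{(n)}_m=\tfrac12(l-m)(l+m+2n+1)$, which follows from $2\lambda^{(n)}_k=(n+k)^2+(n+k)$, to rewrite the relation as exactly the stated recursion for $a_{l,\beta}$ with $\beta=\gamma$. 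This shows that the $C_{l,\alpha}$ defined in the proposition satisfies $L_n C_{l,\alpha}=-\lambda^{(n)}_l C_{l,\alpha}$.

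Finally I would verify that the construction is well posed: for $0\le|\beta|\le l-1$ the denominator $(l-|\beta|)(l+|\beta|+2n+1)$ is a product of strictly positive integers, so the recursion is never singular; reading it downward in $|\beta|$ starting from $|\beta|=l$ (where $a_{l,\beta}=\delta^\alpha_\beta$) it terminates after finitely many degrees and determines all $a_{l,\beta}$ uniquely, and since the coefficient of $x^\alpha$ is $1$ the polynomial $C_{l,\alpha}$ is not identically zero; being a polynomial it lies in $C^\infty(\cl{\D}_n)$, so it is a genuine eigenfunction of $L_n$ on $\cl{\D}_n$. I expect the only steps needing any care to be the monomial computation — in particular the use of Euler's identity for $\sum_{i,j}x^ix^j\partial_i\partial_j$ and putting the eigenvalue in the closed form $\lambda^{(n)}_m=\tfrac12(n+m)(n+m+1)$ — together with the bookkeeping of the index shift $\beta\mapsto\beta-e_i$ when matching coefficients; the rest is routine.
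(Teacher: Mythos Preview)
The paper does not actually supply a proof of this proposition; it is stated at the end of Section~\ref{sec_kolmo} and immediately followed by the next section, with the result being drawn from the authors' earlier work (\cf~\cite{dat,THJ2}). There is therefore no ``paper's own proof'' to compare against.

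Your argument is correct and is the natural direct verification one would expect. The non-divergence form of $L_n$ you quote is exactly the one recorded in the proof of Lemma~\ref{lem_ef-shift}; your monomial computation
\[
L_n x^\beta=-\lambda^{(n)}_{|\beta|}\,x^\beta+\sum_{i=1}^n\frac{\beta^i(\beta^i+1)}{2}\,x^{\beta-e_i}
\]
is right (the Euler-operator identity $\sum_{i,j}x^ix^j\partial_i\partial_j=(\sum_i x^i\partial_i)^2-\sum_i x^i\partial_i$ does the job cleanly), and the factorisation $\lambda^{(n)}_l-\lambda^{(n)}_m=\tfrac12(l-m)(l+m+2n+1)$ turns the coefficient-matching condition into precisely the stated recursion. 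Your remarks on well-posedness (nonvanishing denominator, finite downward recursion, leading coefficient~$1$, polynomial hence smooth on $\cl{\D}_n$) close the argument; the sign convention $L_n\phi=-\lambda\phi$ is indeed the one used in the paper.
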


\section{Solution schemes for the \KFE}

Knowing the eigenfunctions (\cf Proposition~\ref{prop_Gegenb_n}), it is straightforward  to reconstruct the local  solution of \cite{kimura3}, \cite{BBMcK} (for details cf.~\cite{dat,THJ2}).
\begin{prop}\label{prop_sol_n}
For $n\in\N$ and any initial condition  $f\in L^2(\D_n)$, the \KFE corresponding to the diffusion approximation of the $n$-dimen\-sional \WF model   \eqref{eq_Ln} always possesses a unique solution $u\co{\big({\Delta}_{n}\big)}_{\infty}\map\R$ 
with $u(\,\cdot\,,t)\in C^\infty(\D_n)$ for each fixed $t\in(0,\infty)$ and $u(x,\,\cdot\, )\in C^\infty((0,\infty))$ for each fixed $x\in\Delta_{n}$. Furthermore, this solution (and all its spatial derivatives) may be extended continuously to the boundary~$\bd{\D}_n$.
\end{prop}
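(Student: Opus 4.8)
The plan is to construct the solution by eigenfunction expansion and then establish the regularity claims separately. First I would set up the $L^2$ theory: the operator $L_n$ (or rather its realization with the natural degenerate-boundary behaviour) should be shown to generate a strongly continuous semigroup on $L^2(\D_n)$, so that for $f\in L^2(\D_n)$ the function $u(x,t)$ is defined abstractly. Concretely, using Proposition~\ref{prop_Gegenb_n} together with Lemma~\ref{lem_ef-shift}, the functions $\omega_n C_{l,\alpha}$ are eigenfunctions of $L_n^*$ lying in $C_0^2(\cl\D_n)$, and by Lemma~\ref{lem_adjoint} one gets biorthogonality relations $(C_{l,\alpha},\omega_n C_{m,\beta})_n$ that vanish unless the eigenvalues agree. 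After a Gram--Schmidt step within each finite-dimensional eigenspace (the eigenvalue $\lambda_l^{(n)}$ has multiplicity equal to the number of multi-indices of degree $l$), one obtains a biorthogonal system $\{C_{l,\alpha}\}$, $\{\psi_{l,\alpha}\}$ with $\psi_{l,\alpha}=\omega_n(\text{polynomial})$. The candidate solution is then
\begin{align*}
u(x,t)=\sum_{l\geq 0}\sum_{|\alpha|=l} e^{-\lambda_l^{(n)}t}\,(f,\psi_{l,\alpha})_n\,C_{l,\alpha}(x).
\end{align*}

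Next I would prove convergence and smoothness of this series for $t>0$. Because the eigenvalues $\lambda_l^{(n)}=\tfrac{(n+l)(n+l+1)}{2}$ grow quadratically in $l$, the factors $e^{-\lambda_l^{(n)}t}$ decay faster than any power of $l$, so as long as the coefficients $(f,\psi_{l,\alpha})_n$ and the sup-norms (on compact subsets of $\D_n$, and in fact up to the closure) of $C_{l,\alpha}$ and all their derivatives grow only polynomially in $l$, the series and every termwise-differentiated series converge locally uniformly on $\cl\D_n\times(0,\infty)$. This gives at once that $u(\cdot,t)\in C^\infty(\D_n)$, that $u(x,\cdot)\in C^\infty((0,\infty))$, that $L_n u=\partial_t u$, and that $u$ and all spatial derivatives extend continuously to $\bd\D_n$ — the last claim being immediate once the convergence is known to hold in $C^k(\cl\D_n)$ rather than only $C^k(\D_n)$. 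For the initial condition, $u(\cdot,t)\to f$ in $L^2(\D_n)$ as $t\downarrow 0$ follows from the spectral representation and completeness of the eigenfunction system in $L^2(\D_n)$, which one must also verify (e.g.\ via density of polynomials and the triangular structure $C_{l,\alpha}=x^\alpha+\text{lower order}$). Uniqueness in the stated class follows from an energy estimate: if $u$ solves the homogeneous problem with $f=0$, pairing with $u$ via $(\dcd)_n$ and using Lemma~\ref{lem_adjoint} (the boundary terms vanish because of the degeneracy of the coefficients, which is exactly where $\omega_n$ enters) shows $\tfrac{d}{dt}\|u(\cdot,t)\|_n^2\leq 0$, hence $u\equiv 0$.

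The main obstacle I expect is the quantitative control of the eigenfunctions: showing that the Gegenbauer-type polynomials $C_{l,\alpha}$ and their derivatives have at most polynomially growing sup-norms on $\cl\D_n$, and that the dual functions $\psi_{l,\alpha}$ are similarly controlled, so that the series can be differentiated term by term up to the boundary. This requires a careful reading of the inductive formula for the coefficients $a_{l,\beta}$ in Proposition~\ref{prop_Gegenb_n} — the denominators $(l-|\beta|)(l+|\beta|+2n+1)$ are favourable, but one must check that the accumulated products stay bounded polynomially — together with estimates on the Gram matrix used in the orthonormalization within each eigenspace (its smallest eigenvalue must not decay too fast). This is exactly the sort of estimate that is carried out in the cited references \cite{dat,THJ2}, so in practice I would invoke those: the statement explicitly says the proof is a reconstruction of the schemes of \cite{kimura3,BBMcK} along the lines of \cite{dat,THJ2}. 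The remaining parts — semigroup/generation theory, completeness, uniqueness by the energy method — are then routine given the machinery already assembled in Section~\ref{sec_kolmo}.
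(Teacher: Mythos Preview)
Your approach via eigenfunction expansion is precisely what the paper intends: it does not spell out a proof here but defers to \cite{dat,THJ2}, having already set up the Gegenbauer-type eigenfunctions in Proposition~\ref{prop_Gegenb_n}, and remarks afterwards that the regularity (including the continuous extension to $\bd\D_n$) comes from the polynomial nature of the $C_{l,\alpha}$. Your identification of the main technical load --- polynomial growth bounds on the $C_{l,\alpha}$ and their duals $\omega_n C_{l,\alpha}$ so that the exponential spectral decay $e^{-\lambda_l^{(n)}t}$ dominates --- is exactly right, and you are correct that this is what the cited references carry out.

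One point to fix: your uniqueness argument via an energy estimate does not go through as written. Lemma~\ref{lem_adjoint} requires the test function to lie in $C^2_0(\cl\D_n)$, and the solution $u$ has no reason to vanish on $\bd\D_n$; indeed Proposition~\ref{prop_mgf} shows that $(L_nu,u)_n$ and $(u,L_n^*u)_n$ differ by a genuine flux term $-\int_{\bd_{n-1}\D_n} u\,G_u\cdot\nu$, which carries no obvious sign for a difference of solutions. The degeneracy of the coefficients kills only the \emph{second} boundary integral arising in the two integrations by parts (the one containing $a^{ij}\nu^j$), not the flux term. The clean route to uniqueness is the spectral one you already have the ingredients for: test against $\psi=\omega_n C_{l,\alpha}\in C^\infty_0(\cl\D_n)$, so that Lemma~\ref{lem_adjoint} applies without boundary terms, obtain $(u(t),\psi)_n=e^{-\lambda t}(f,\psi)_n$, and conclude from the density statement of Lemma~\ref{lem_complete-weak}. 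This is exactly the mechanism the paper uses later in the proof of Proposition~\ref{prop_weak_unique}.
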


The regularity, which follows from the regularity of the generalised Gegenbauer polynomials, of course agrees  with standard PDE theory (\cf \eg~\cite{jost_pde}).

\subsection{Moments and the weak formulation of the \KFE}\label{sec_weak}

The  solution of equation~\eqref{eq_Ln} in $\D_n$ lacks conservation properties: As the smallest eigenvalue of $L_n$ is $\lambda_0^{(n)}=\frac{n(n+1)}2>0$, a solution tends to $0$  everywhere in $\D_n$ for $t\to\infty$. In particular, the total mass and all other moments are  not preserved. However,  these properties are an important property
 of the model, and what disappears in the interior of the simplex should accumulate in its boundary. After all, the process should continue after the loss of one or several alleles. We shall therefore introduce a suitable extended solution 
on the entire $\cl{\D}_n$. This solution will be derived from the conservation of the moments of the process. 

The moments of the $n$-dimen\-sional process as obtained as limits of those from  the underlying discrete model satisfy the 
\textit{moment evolution equations} 
\begin{align}\label{eq_moments_n}
\dd{}{t}\bar{\mu}_\alpha(t)=-\frac{\abs{\alpha}(\abs{\alpha}-1)}{2}\bar{\mu}_\alpha(t)+\sum_{i=1}^n\frac{\alpha_i(\alpha_i-1)}{2}\bar{\mu}_{\alpha-e_i}(t) 
\end{align}
for $\alpha=(\alpha_1,\dotsc,\alpha_n)$, $\abs{\alpha}\geq 1$, whereas $\dd{}{t}\bar{\mu}_0(t)=0$ (with $e_i$ denoting the multi-index $(0,\dotsc,0,1,0,\dotsc,0)$ with 1 appearing at the $i$\ord position). These moments can be defined as 
\begin{align}\label{eq_moments_U}
\bar{\mu}_\alpha(t)\ce\big[U,x^\alpha\big]_n\equiv\sum_{k=0}^n\:\intlim_{\bd_k\Delta_n} U(x,t)x^\alpha\,\leb_k(dx),\quad\text{$t\geq0$, $\alpha=(\alpha_1,\dotsc,\alpha_n)$},
\end{align}
with the hierarchical product  introduced in equation~\eqref{eq_prod_[]}. This now involves an integration over $\cl{\D}_n$, that is, including  the boundary $\bd\D_n$ of the state space, which corresponds to configurations of the model where some allele frequencies may be zero. Therefore, we introduce the capitalised $U\co(\cl{\D}_n)_\infty\map\R$  as an extended solution as the probability density function of the diffusion approximation of the $n$-dimensional  \WF process   on the entire $\cl{\D}_n$ (thus in particular $U\vert_{\D_n}$ is a solution of the \KFE\eqref{eq_Ln} in $\D_n$).

We shall now discuss the consistency  between the moments evolution equation~\eqref{eq_moments_n} and the Kolmogorov backward operator $L^*$ in $\cl{\D}_n$ as defined in equation~\eqref{eq_Ln*_def} (actually, the following considerations also hold for a generic product $[\dcd]$). Since $L^*$ has polynomial coefficients, it maps polynomials to polynomials, and we have
\begin{align}\notag
L^* x^\alpha
&= \half\sum_{i,j=1}^n\big(x^i(\delta^i_j-x^j)\big)\ddd{}{x^i}{x^j}x^\alpha\\\notag
&=\half\sum_{i=1}^n\alpha_i(\alpha_i-1)(x^{\alpha-e_i}-x^\alpha)-\half\sum_{i\neq j}\alpha_i\alpha_j x^\alpha\\
&=\half\sum_{i=1}^n\alpha_i(\alpha_i-1)x^{\alpha-e_i}-\half\abs{\alpha}(\abs{\alpha}-1)x^{\alpha}
\qquad\text{for $x\in\cl{\D}_n$},
\end{align}
which yields, using the notation of equation~\eqref{eq_moments_U}, 
\begin{align}
\big[U(t),L_n^*x^\alpha\big]_n=\half\sum_{i=1}^n\alpha_i(\alpha_i-1)\bar{\mu}_{\alpha-e_i}(t)-\half\abs{\alpha}(\abs{\alpha}-1)\bar{\mu}^{\alpha}(t)
\end{align}
where the right-hand side is equal to that of equation~\eqref{eq_moments_n}. Thus, if the moments equation is fulfilled for some probability density function~$U$, we may equivalently write
\begin{align}
\dd{}{t}\bar{\mu}_\alpha(t)=\bigg[\dd{}{t}{U(t)},x^\alpha\bigg]_n=\big[{U(t)},L_n^*x^\alpha\big]_n\quad \text{
for $t\in(0,\infty)$}.
\end{align}

Since the $x^\alpha$ generate the space of all polynomials and since the polynomials are dense in $C^\infty$, we therefore also have such relations for arbitrary  test functions $\phi$, 
\begin{align}\label{eq_weak_hier}
\bigg[\dd{}{t}{U(t)},\phi\bigg]_n=\big[{U(t)},L_n^*\phi\big]_n\quad \text{for $\phi\in C^\infty(\cl{\D}_n)$ and all $t\in(0,\infty)$}.
\end{align}
This is our  {weak formulation} \eqref{eq_weak_pre} of the \KFE\eqref{eq_Ln}. 
We may also write the  initial condition%
\footnote{Since we integrate over $\cl{\D}_n$, $f$ may now also be formulated as an extended initial condition on the entire $\cl{\D}_{n}$. Then, $f\vert_{\bd\D_n}\neq0$ would correspond to the process (partially) already starting on certain boundary instances. However, these parts of the process exactly evolve as a proper process of corresponding dimension, and hence do not yield any further insight into the nature of the process. For this reason, we will usually assume ${f}\vert_{\bd\D_n}\equiv 0$ or that~$f$ is extended that way if it is only given on~$\D_n$.} %
weakly as
\begin{align}
\big[U(\,\cdot\,,0),\phi\big]_n=\big[f,\phi\big]_n\quad\text{for all $\phi\in C^\infty(\cl{\D}_n)$,}
\end{align}
which requires no explicit regularity towards the boundary (but we shall need that its restriction to interior instances can be continuously extended to the corresponding boundary). 
Only, an integrability condition applies, which is  $U(\fdt,t),\dd{}{t}U(\fdt,t),f\in L^2\big(\bigcup_{k=0}^n\bd_k\D_n\big)$ for $t\geq0$.

Summarising our findings, we have:
\begin{lem}\label{lem_weak=moments+}
A function $U\co\big(\cl{\D}_n\big)_\infty\map\R$, $U(\fdt,t),\dd{}{t}U(\fdt,t)\in L^2\big(\bigcup_{k=0}^n\bd_k\D_n\big)$ for $t\geq0$ with corresponding moments $\bar{\mu}_\alpha(t)=[U(t),x^\alpha]_n$, $\alpha=(\alpha_1,\dotsc,\alpha_n)$, $t\geq0$ that satisfies the moments evolution equation~\eqref{eq_moments_n} also solves the weak formulation of the \KFE\eqref{eq_weak_hier} and conversely. 
\end{lem}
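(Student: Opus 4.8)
The plan is to recognise that Lemma~\ref{lem_weak=moments+} merely packages the computations already carried out in the paragraphs preceding it: the explicit formula for $L_n^*x^\alpha$ and the identity $\dd{}{t}\bar\mu_\alpha(t)=\big[\dd{}{t}U(t),x^\alpha\big]_n$. Concretely I would (i) justify passing $\dd{}{t}$ under the hierarchical integral, (ii) run the monomial identity in both directions, and (iii) upgrade from monomials to all $\phi\in C^\infty(\cl{\D}_n)$ by density; the converse direction then comes essentially for free.

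For (i) and (ii): since $U(\fdt,t),\dd{}{t}U(\fdt,t)\in L^2\big(\bigcup_{k=0}^n\bd_k\D_n\big)$ and each monomial $x^\alpha$ is bounded on the compact set $\cl{\D}_n$ with $\leb_k(\bd_k\D_n)<\infty$, Cauchy--Schwarz on each stratum $\bd_k\D_n$ shows that the finite sum $[U(t),x^\alpha]_n=\sum_{k=0}^n\int_{\bd_k\D_n}U(x,t)x^\alpha\,\leb_k(dx)$ may be differentiated in $t$ under the integral sign, giving $\dd{}{t}\bar\mu_\alpha(t)=\big[\dd{}{t}U(t),x^\alpha\big]_n$; the same estimate shows $\phi\mapsto\big[\dd{}{t}U(t),\phi\big]_n$ is continuous on $C^0(\cl{\D}_n)$. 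On the other hand, the computation displayed just above the lemma gives $L_n^*x^\alpha=\half\sum_{i=1}^n\alpha_i(\alpha_i-1)x^{\alpha-e_i}-\half\abs{\alpha}(\abs{\alpha}-1)x^\alpha$, so $\big[U(t),L_n^*x^\alpha\big]_n$ equals precisely the right-hand side of~\eqref{eq_moments_n}. Hence~\eqref{eq_moments_n} holds for all $\alpha$ (with the trivial case $\dd{}{t}\bar\mu_0=0=[U(t),L_n^*1]_n$) if and only if $\big[\dd{}{t}U(t),x^\alpha\big]_n=\big[U(t),L_n^*x^\alpha\big]_n$ for all $\alpha$.

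For (iii): by linearity the last identity holds with $x^\alpha$ replaced by any polynomial $p$. To extend it to $\phi\in C^\infty(\cl{\D}_n)$, observe that $\phi\mapsto\big[U(t),L_n^*\phi\big]_n$ is continuous on $C^2(\cl{\D}_n)$, since $L_n^*\phi$ depends only on the derivatives of $\phi$ up to order~$2$ with coefficients $\half x^i(\delta^i_j-x^j)$ bounded on $\cl{\D}_n$, whence $\|L_n^*\phi\|_{L^2(\bd_k\D_n)}\le C_k\|\phi\|_{C^2(\cl{\D}_n)}$; the functional $\phi\mapsto\big[\dd{}{t}U(t),\phi\big]_n$ is $C^0$-continuous as noted above. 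Because $\cl{\D}_n$ is compact and convex, polynomials are dense in $C^2(\cl{\D}_n)$ (for instance by Bernstein approximation on the simplex, which converges together with derivatives of any fixed order); choosing $p_m\to\phi$ in $C^2(\cl{\D}_n)$ and letting $m\to\infty$ in $\big[\dd{}{t}U(t),p_m\big]_n=\big[U(t),L_n^*p_m\big]_n$ yields the weak formulation~\eqref{eq_weak_hier}. Conversely, if~\eqref{eq_weak_hier} holds, the choice $\phi=x^\alpha$ recovers the monomial identities for every $\alpha$, and by (ii) these are equivalent to the moment evolution equations~\eqref{eq_moments_n}.

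The only genuinely delicate point is the exchange of $\dd{}{t}$ with the hierarchical integral in step~(i): one must be certain that, under no more than the stated $L^2$-integrability of $U(\fdt,t)$ and $\dd{}{t}U(\fdt,t)$ on every boundary stratum, $t\mapsto[U(t),x^\alpha]_n$ is differentiable with derivative $[\dd{}{t}U(t),x^\alpha]_n$ (e.g.\ by viewing $t\mapsto U(\fdt,t)$ as an absolutely continuous $L^2(\bd_k\D_n)$-valued curve for each $k$). Everything else reduces to linear algebra and a routine polynomial-density argument.
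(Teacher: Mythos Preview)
Your proposal is correct and follows essentially the same route as the paper: the lemma is presented there as a summary (``Summarising our findings, we have:''), with the proof being the preceding computation of $L_n^*x^\alpha$, the observation that~\eqref{eq_moments_n} is precisely $\big[\dd{}{t}U(t),x^\alpha\big]_n=\big[U(t),L_n^*x^\alpha\big]_n$, and the one-line remark that monomials generate the polynomials which are dense in $C^\infty$. You have simply made explicit the analytical justifications (differentiation under the hierarchical integral, continuity of the two functionals, Bernstein approximation for $C^2$-density) that the paper leaves implicit.
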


\subsection{A hierarchical extension of solutions and the boundary flux}\label{sec_hier_sol_fwd}

We shall now construct suitable boundary values as required for an extended solution $U\co\big(\cl{\D}_n\big)_\infty\map\R$. For that purpose,  we shall introduce  the concept of the boundary flux. The investigation of the boundary flux as the basis for a hierarchical solution 
scheme is the main new contribution of this paper and will follow below.

\begin{dfi}\label{dfi_allU}
For $\D_n^{(I_n)}$ with $I_n=\set{0,1,\dotsc,n}$ and a solution $u\colon\big(\D_n^{(I_n)}\big)_\infty\map\R$ of the \KFE\eqref{eq_Ln} for given $f\colon \D_n^{(I_n)}\map\R$ as in proposition~\ref{prop_sol_n},
a \itind{hierarchical extension}
\begin{align}
U &\co\big( \overline{\D_n^{(I_n)}}\big)_\infty\map\R\quad\text{with}\quad U(x,t)\ce \sum_{k=0}^n U_k(x,t)\chi_{\bd_k\D_n^{(I_n)}}(x)
\end{align}
is given by 
\begin{align}
U_{k}\colon\hspace*{-1pt}\big(\bd_k\D_n^{(I_n)}\big)_\infty\hspace*{-2.5pt}\map\R \hspace*{7.5pt}\text{with}\hspace*{7.5pt} U_{k}(x,t)\hspace*{-0.8pt}\ce\hspace*{-1.3pt}
\begin{cases}
u(x,t)		&\hspace*{-6pt}\text{for $x\in\D_n^{(I_n)}\hspace*{-1.5pt}\equiv\bd_n\D_n^{(I_n)}$}\\
U_{k,I_k}(x,t)  &\hspace*{-6pt}\text{for $x\in{\Delta}_{k}^{(I_k)}\hspace*{-1.5pt}\subset\bd_k\D_n^{(I_n)}\hspace*{-2pt},I_k\subset I_n$}\\
0		&\hspace*{-6pt}\text{else}
\end{cases}
\end{align}
for all $0\leq k\leq n$ 
and  
\begin{align}
&U_{k,I_k}\colon\big(\D_k^{(I_k)}\big)_\infty\map\R \quad\text{with}\quad U_{k,I_k}(x,t)\ce\intlim_0^t u_{k,I_{k}}^\tau(x,t-\tau)\,d\tau
\end{align}
for $0\leq k\leq n-1$ and for all subsets $I_k\subset I_n$. Here,  $u_{k,I_{k}}^\tau(x,t)\colon\big(\D_k^{(I_k)}\big)_\infty\map\R$ is a solution of 
\begin{equation}
\begin{cases}
L_k u(x,t) = \dd{}{t} u(x,t)			&\text{$(x,t)\in \big(\D_k^{(I_k)}\big)_\infty$}\\
u(x,0) = \sum_{I_{k+1}\supset 
I_k}G_{U_{k+1,I_{k+1}}}^\bot(x,\tau) &\text{$x \in \D_k^{(I_k)}$}\\
\end{cases}
\end{equation}
for all $\tau > 0$ as in proposition~\ref{prop_sol_n}
and  $G_{U_{k+1,I_{k+1}}}^\bot$ is the normal component of the flux of the {continuous extension} of $U_{k+1,I_{k+1}}$ to 
$\cl{\D_{k+1}^{(I_{k+1})}}$.
\end{dfi}

In general, the flux\sindex[not]{Guxt@$G_u(x,t)$} $G_u
\co(\Delta_n)_\infty\map\R^n$ of a solution $u\co(\Delta_n)_\infty\map\R^n$ of equation~\eqref{eq_Ln}
is given in terms of its components
\begin{align}
G_u^i(x,t)\ce-\half\sum_{j=1}^n\dd{}{x^j}(x^i(\delta^i_j-x^j)u(x,t))=-\half\sum_{j=1}^n\dd{}{x^j} (a^{ij}u(x,t)),\quad i=1,\dotsc,n.
\end{align}
In particular, we have
\begin{align}
\Div G_u=\sum_{i=1}^n\dd{}{x^i}G_u^i=-L_nu=- u_t.
\end{align} 
Again, this concept directly extends to boundary instances of $\cl{\D}_n$ if~$u$ extends to the boundary such that the extension is of class $C^2$ with respect to the spatial variables (which is the case for a solution from  proposition~\ref{prop_sol_n}).

With this  flux, we can now  state a generalised form of lemma~\ref{lem_adjoint}, which yields the adjointness  for the Kolmogorov operators $L_n$ and $L_n^\ast$ also for  non-vanish\-ing boundary terms:
\begin{prop}
\label{prop_mgf}
For $n\in\Np$ and $u,\phi\in C^2\big(\overline{\Delta}_{n}\big)$, we have
\begin{equation}\label{eq_mgf}
(L_n u,\phi)_{n}=-\intlim_{\bd_{n-1}\D_n}\!\phi\,G_u\cdot\nu \,d\leb_{n-1}+(u,L_n^*\phi)_{n} 
\end{equation}
where $G_u$ is the flux of~$u$ and $\nu$ the outward unit surface normal to $\bd\D_n$.
\end{prop}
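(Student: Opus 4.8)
The plan is to derive \eqref{eq_mgf} by applying the Gauss--Green theorem twice on the convex polytope $\cl{\D}_n$, exploiting the degeneracy of the coefficients $a^{ij}=x^i(\delta^i_j-x^j)$ at $\bd\D_n$: it forces the full conormal vector $\big(\sum_{j}a^{ij}\nu_j\big)_{i=1}^n$ to vanish on every facet. Two preliminary remarks make the bookkeeping clean. Since $\bd\D_n$ is $\leb_n$-null, integrals over $\D_n$ and over $\cl{\D}_n$ coincide; and since $\bd\D_n$ is a finite union of closed facets overlapping only in faces of dimension $\le n-2$ (which are $\leb_{n-1}$-null), a surface integral over $\bd\D_n$ against $d\leb_{n-1}$ equals one over $\bd_{n-1}\D_n$. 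Moreover, $u\in C^2(\cl{\D}_n)$ makes the flux field $G_u=(G_u^1,\dotsc,G_u^n)$ of class $C^1$ up to the boundary, so the divergence theorem applies; likewise $a^{ij}u\,\dd{}{x^i}\phi$ is $C^1$ because $\phi\in C^2(\cl{\D}_n)$.

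\emph{First application.} Using $\Div G_u=-L_nu$ and the divergence theorem,
\begin{align*}
(L_nu,\phi)_n=-\intlim_{\D_n}(\Div G_u)\,\phi\,d\leb_n
=-\intlim_{\bd_{n-1}\D_n}\phi\,G_u\cdot\nu\,d\leb_{n-1}+\intlim_{\D_n}G_u\cdot\nabla\phi\,d\leb_n,
\end{align*}
which already produces the boundary term of \eqref{eq_mgf}. \emph{Second application.} Inserting $G_u^i=-\half\sum_{j}\dd{}{x^j}(a^{ij}u)$ and applying the divergence theorem, for each fixed $i$, to the vector field whose $j$-th component is $a^{ij}u\,\dd{}{x^i}\phi$ gives
\begin{align*}
\intlim_{\D_n}G_u\cdot\nabla\phi\,d\leb_n
=-\half\sum_{i}\intlim_{\bd\D_n}u\,\dd{}{x^i}\phi\,\Big(\sum_{j}a^{ij}\nu_j\Big)d\leb_{n-1}
+\half\sum_{i,j}\intlim_{\D_n}a^{ij}u\,\ddd{}{x^i}{x^j}\phi\,d\leb_n.
\end{align*}
The last volume term is exactly $(u,L_n^*\phi)_n$ by \eqref{eq_Ln*_def}, so it remains only to check that the boundary term vanishes. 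On the facet $\{x^m=0\}$ with $m\in\set{1,\dotsc,n}$ one has $\nu=-e_m$ and $a^{im}\big\vert_{x^m=0}=x^i\delta^i_m\big\vert_{x^m=0}=0$ for every $i$, hence $\sum_j a^{ij}\nu_j=-a^{im}=0$; and on the facet $\{x^0=0\}=\{\sum_{l=1}^n x^l=1\}$ the outward normal is proportional to $(1,\dotsc,1)$, so $\sum_j a^{ij}\nu_j$ is proportional to $\sum_{j=1}^n x^i(\delta^i_j-x^j)=x^i\big(1-\sum_{j=1}^n x^j\big)=x^i x^0=0$. Thus the conormal boundary term is identically zero, and combining the two applications yields \eqref{eq_mgf}; taking $\phi\in C^2_0(\cl{\D}_n)$ kills the remaining flux term and recovers Lemma~\ref{lem_adjoint}.

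The step I expect to require the most care — though it is short — is this last one: verifying that the degenerate diffusion matrix annihilates the conormal on each facet of $\cl{\D}_n$. This is precisely what makes $-\intlim_{\bd_{n-1}\D_n}\phi\,G_u\cdot\nu$ the \emph{only} boundary contribution, with no residual $\nabla\phi$-term surviving, and it is the analytic reflection of the characteristic (absorbing) nature of $\bd\D_n$ for the \WF operator. Everything else is the standard Gauss--Green theorem on a convex polytope together with the null-set observations above.
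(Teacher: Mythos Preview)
Your proof is correct and follows essentially the same route as the paper's: two applications of the divergence theorem on $\cl{\D}_n$, replacement of $\bd\D_n$ by $\bd_{n-1}\D_n$ via a null-set argument, and the facet-by-facet verification that $\sum_j a^{ij}\nu_j=0$ on each $\D_{n-1}^{(I_n\smin\{l\})}$ (using $\nu=-e_l$ for $l\ge1$ and $\nu=\frac1{\sqrt{n}}(1,\dotsc,1)$ for $l=0$). Your framing of this last step as ``the degenerate diffusion matrix annihilates the conormal'' is a nice way to put it, and your explicit remarks on the $C^1$-regularity of the vector fields involved make the applicability of Gauss--Green slightly more transparent than in the paper.
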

\begin{proof}
We use the integration by parts formula
\begin{align}
\intlim_\Omega\dd{u}{x^i}\phi\,d\Omega=\intlim_{\bd\Omega}\phi u\,\nu^i\,d\bd\Omega-\intlim_\Omega u \dd{\phi}{x^i}\;d\Omega,
\end{align}
holding for a domain $\Omega$ with piecewise continuous boundary $\bd\Omega$, $u,\phi\in C^1(\overline{\Omega})$ and $\nu^i$ being the $i$\ord component of the outward unit surface normal to $\bd\Omega$. 
This yields
\begin{align}\notag
(L_n u,\phi)_{n} 
&=-\intlim_{\D_n}\sum_{i}\dd{}{x^i}G_u^i\phi\,d\leb_n\\
&=-\intlim_{\bd\D_n}\sum_{i}G_u^i\nu^i\phi\,d\leb_{n-1}+\intlim_{\D_n}\sum_{i}G_u^i\dd{}{x^i}\phi\,d\leb_n.
\end{align}
$\bigcup_{k=0}^{n-2}\bd_{k}\D_n$ clearly is a null set with respect to $\leb_{n-1}$, and we may hence replace the domain of integration of the first summand by $\bd_{n-1}\D_n$.
For the second term, we apply the integration by parts formula again (with the modified domain of integration):
\begin{multline}
\intlim_{\D_n}\sum_{i}G_u^i\dd{}{x^i}\phi\,d\leb_n
=-\intlim_{\bd_{n-1}\D_n}\half\sum_{i,j}x^i(\delta^i_j-x^j)u\nu^j\dd{}{x^i}\phi\,d\leb_{n-1}\\+\intlim_{\D_n}\half\sum_{i,j}a^{ij}u\ddd{}{x^i}{x^j}\phi\,d\leb_n.
\end{multline}
For the boundary integral over $\bd_{n-1}\D_n=\bigcup_{l=0}^n\D_{n-1}^{(I_n\smin\set{l})}$, we have $\nu^j=-\delta^j_l$ on $\D_{n-1}^{(I_n\smin\set{l})},l=1,\dotsc,n$ and $\nu^j=\frac1{\sqrt{n}}$ on $\D_{n-1}^{(I_n\smin\set{0})}$, which yields
\begin{align}
\sum_{j}x^i(\delta^i_j-x^j)u\nu^j
&=-x^i(\delta^i_l-x^l)u=0&&\text{on $\D_{n-1}^{(I_n\smin\set{l})}=\bigset{x^l=0}$}
\intertext{and}\notag
\label{eq_flux_face_0}
\sum_{j}x^i(\delta^i_j-x^j)u\nu^j
&=\frac1{\sqrt{n}}\sum_{j}x^i(\delta^i_j-x^j)u\\
&=\frac1{\sqrt{n}}x^i\Big(1-\sum_{j}x^j\Big)u=0&&\text{on $\D_{n-1}^{(I_n\smin\set{0})}=\Bigset{1-\sum_j x^j=0}$}.
\end{align}
Thus, the second integral over $\bd_{n-1}\D_n$ vanishes. Altogether, we have
\begin{align}
(L_n u,\phi)_{n} 
&=-\intlim_{\bd_{n-1}\D_n}\sum_{i}G_u^i\nu^i\phi\,d\leb_{n-1}+\intlim_{\D_n}u\half\sum_{i,j}a^{ij}\ddd{}{x^i}{x^j}\phi\,d\leb_n\\
&=-\intlim_{\bd_{n-1}\D_n}G_u\cdot\nu\phi\,d\leb_{n-1}+(u,L_n^*\phi)_{n}.\qedhere
\end{align}
\end{proof}
If $\phi\colon \cl{\D}_{n}\map\R$ is a polynomial of degree less than 2, we have  $L^*\phi=0$. Integrating the flux $G_u$ on $\bd_{n-1}\D_n$ over time as boundary values for a solution~$u$ of equation~\eqref{eq_Ln} (\resp for its continuous extension to $\bd\D_n$), proposition~\ref{prop_mgf}  already yields the behaviour for the \zeroth and the \first moment which is prescribed by the moments evolution equation~\eqref{eq_moments_U}. Thus, the total mass and the expectation value of the process are preserved in this case.

\label{pag_about_u_hat} This concept of a solution in $\D_n$ plus accumulated flux on the boundary $\bd_{n-1}\D_n$ is  not yet sufficient. In general it does not yield the desired evolution laws for moments of degree 2 or higher, nor does $\bd_{n-1}\D_n$ account for the full boundary $\bd\D_n$. This is resolved by assuming that the incoming flux rather evolves as if it were an $(n-1)$-dimen\-sional \WF process, \ie as a subsolution on $\bd_{n-1}\D_n$ instead of accumulating it on $\bd_{n-1}\D_n$ for $n\geq2$ statically. Iteratively repeating  the construction of boundary data to the boundary instances of  lower dimension by assessing the respective boundary flux of the subsolutions on each $\bd_{n-2}\D_{n-1}$ leads to Definition~\ref{dfi_allU}.

\begin{rmk}\label{rmk_hier_sol}
For a given solution~$u$ of equation~\eqref{eq_Ln}, the induced boundary functions $U_k$ on $\bd_k\D_n^{(I_n)}$ for $0\leq k\leq n-1$ of Definition~\ref{dfi_allU} in general do not satisfy the equation $\dd{}{t}U_k=L_kU_k$ in some $\D_k^{(I_k)}\subset\bd_k\D_n^{(I_n)}$. Consequently, they  are not solutions of the corresponding $k$-dimen\-sional problem~\eqref{eq_Ln} in $\D_k^{(I_k)}$.
\end{rmk}

\subsection{An application of the hierarchical conception}

For the hierarchically extended solution and the product $[\dcd]_n$, we may now continue the line of reasoning  of lemma~\ref{lem_adjoint} and proposition~\ref{prop_mgf}.
\begin{prop}\label{prop_allU}
A \txind{hierarchical extension} $U\colon\big(\overline{\D_n^{(I_n)}}\big)_\infty\map\R$ (\cf definition~\ref{dfi_allU}) of a solution~$u$ of the \KFE\eqref{eq_Ln} in $\D_n$ satisfies 
\begin{align}\label{eq_allU}
\bigg[\dd{}{t}U(t),\phi\bigg]_n
=\big [U(t),L^*\phi\big]_n
\end{align}
\text{for $\phi\in C^\infty\big(\overline{\D_n^{(I_n)}}\big)$ and for all $t\in (0,\infty)$}.
\end{prop}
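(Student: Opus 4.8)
The plan is to prove \eqref{eq_allU} by induction on $n$, peeling off the top-dimensional stratum and reducing the remaining claim to the same statement one dimension lower. The key decomposition is $[u,v]_n = (u,v)_n + \sum_{l=0}^{n} [U\vert_{\cl{\D}_{n-1}^{(I_n\smin\set{l})}}, v\vert_{\cl{\D}_{n-1}^{(I_n\smin\set{l})}}]_{n-1}$, which follows directly from the definition \eqref{eq_prod_delta_k} of the hierarchical product by grouping the faces $\D_k^{(I_k)}$ according to which facet of $\D_n$ they lie in (a face of codimension $\geq 1$ lies in at least one facet; care is needed about faces lying in several facets, but this only affects a $\leb_k$-null set for $k<n-1$ after one applies the inclusion–exclusion bookkeeping, or one simply fixes a convention as in \eqref{eq_bd_k}). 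So the first step is to write $\big[\dd{}{t}U,\phi\big]_n$ and $\big[U,L^*\phi\big]_n$ using this splitting.

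\textbf{The top stratum.} On $\D_n^{(I_n)}$ we have $U\vert_{\D_n}=u$, a genuine solution of \eqref{eq_Ln}, so $\dd{}{t}u = L_n u$ there, and Proposition~\ref{prop_mgf} gives
\begin{align*}
\bigg(\dd{}{t}u,\phi\bigg)_n=(L_nu,\phi)_n=-\intlim_{\bd_{n-1}\D_n}\phi\,G_u\cdot\nu\,d\leb_{n-1}+(u,L_n^*\phi)_n.
\end{align*}
Hence the discrepancy between the two sides of \eqref{eq_allU}, restricted to the top stratum, is precisely the boundary-flux term $-\int_{\bd_{n-1}\D_n}\phi\,G_u\cdot\nu\,d\leb_{n-1}$. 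By the computation in the proof of Proposition~\ref{prop_mgf}, on the facets $\D_{n-1}^{(I_n\smin\set{l})}$ with $l\geq 1$ the relevant coefficients vanish, and likewise on $\D_{n-1}^{(I_n\smin\set 0)}$, so only the $G_u^i\nu^i$ part survives; writing $\nu=-e_l$ on the $l$-th facet, this term becomes $\sum_{l}\int_{\D_{n-1}^{(I_n\smin\set l)}}\phi\,G_u^{(l)}\cdot\nu\,d\leb_{n-1}$, i.e. exactly (the negative of) the incoming normal flux $G^{\bot}_{u}$ that, by Definition~\ref{dfi_allU}, serves as the source term initializing the subsolutions on each facet.

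\textbf{Matching with the lower strata.} Now I would show that this leftover flux integral is exactly cancelled by applying the inductive hypothesis on each facet $\cl{\D}_{n-1}^{(I_n\smin\set l)}$, together with the way $U_{n-1,I_{n-1}}(x,t)=\int_0^t u^\tau_{n-1,I_{n-1}}(x,t-\tau)\,d\tau$ is built from the flux. The crucial point is Duhamel: differentiating $\dd{}{t}U_{n-1,I_{n-1}}(t) = u^t_{n-1,I_{n-1}}(x,0) + \int_0^t \dd{}{t}u^\tau_{n-1,I_{n-1}}(x,t-\tau)\,d\tau$, where the first term is $\sum_{I_n\supset I_{n-1}}G^\bot_{U_{n,I_n}}(x,t)=G^\bot_u(x,t)$ (the initial condition in Definition~\ref{dfi_allU}) and the integrand in the second term equals $L_{n-1}u^\tau_{n-1,I_{n-1}}(x,t-\tau)$ since each $u^\tau$ solves the $(n-1)$-dimensional \KFE. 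Thus on each facet $\dd{}{t}U_{n-1} = G^\bot_u + L_{n-1}U_{n-1}$ in the interior, and one pairs this with $\phi$ under $[\,\cdot\,,\,\cdot\,]_{n-1}$: the $G^\bot_u$ contribution supplies precisely the missing flux term from the top stratum (with the correct sign), while the $L_{n-1}U_{n-1}$ contribution is handled by the inductive hypothesis, which converts $[L_{n-1}U_{n-1},\phi]_{n-1}$ — read as $[\dd{}{t}U_{n-1} - G^\bot_u,\phi]_{n-1}$ — into $[U_{n-1},L^*\phi]_{n-1}$, modulo its own boundary flux, which in turn feeds the next level down. Summing the geometric cascade and using Lemma~\ref{lem_restr_bwd} (so that $L^*$ on $\D_{n-1}^{(I_{n-1})}$ is the intrinsic backward operator $L_{n-1}^*$) collapses everything to $[U,L^*\phi]_n$. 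The base case $n=0$ is trivial: $\D_0$ is a point, both sides are $0$.

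\textbf{Main obstacle.} The delicate part is the bookkeeping of signs and multiplicities in the flux-matching step: one must verify that the outward normal flux leaving $\D_n$ through a facet, integrated against $\phi$, has exactly the sign and magnitude of the source term $G^\bot_{U_{k+1,I_{k+1}}}$ prescribed in Definition~\ref{dfi_allU}, and that when a lower face $\D_k^{(I_k)}$ is contained in several facets the contributions are counted consistently with the single sum $\sum_{I_k\subset I_n}$ in \eqref{eq_prod_delta_k} rather than over-counted. A secondary technical point is justifying differentiation under the integral sign in the Duhamel formula and the continuity up to the boundary needed to make $G^\bot_u$ well-defined as an $L^2$ initial datum on each facet — but this is guaranteed by Proposition~\ref{prop_sol_n}, which provides smoothness of $u$ and all its derivatives up to $\bd\D_n$, applied recursively at each level of the hierarchy.
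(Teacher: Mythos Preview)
Your induction-on-$n$ strategy has two structural problems that prevent it from going through as written.

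First, the decomposition $[u,v]_n=(u,v)_n+\sum_{l=0}^n[u,v]_{n-1}$ (the sum taken over the closed facets $\overline{\Delta}_{n-1}^{(I_n\smin\{l\})}$) is false: a face $\Delta_k^{(I_k)}$ with $k\le n-2$ lies in $n-k\ge2$ distinct facets, so the right-hand side counts its integral $n-k$ times while the left-hand side counts it once. This is not a null-set issue---the hierarchical product integrates each face against its own $k$-dimensional measure, so the overcount genuinely changes the value (already for $n=2$ the vertices are counted twice). Inclusion--exclusion could in principle repair this, but then you no longer have a clean reduction to the $(n-1)$-dimensional statement.

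Second, and more seriously, the inductive hypothesis does not apply on a closed facet: the restriction $U\big\vert_{\overline{\Delta}_{n-1}^{(I_{n-1})}}$ is \emph{not} a hierarchical extension (in the sense of Definition~\ref{dfi_allU}) of any solution on $\Delta_{n-1}^{(I_{n-1})}$. A lower face $\Delta_k^{(I_k)}$ with $I_k\subset I_{n-1}$ receives flux from \emph{all} $(k+1)$-faces $\Delta_{k+1}^{(I_{k+1})}$ with $I_k\subset I_{k+1}\subset I_n$, including those not contained in the chosen facet, so the $U_{k,I_k}$ living there is not what the hierarchical construction starting from $U_{n-1,I_{n-1}}$ alone would produce. (This is the point of Remark~\ref{rmk_hier_sol}.) Hence your step ``the $L_{n-1}U_{n-1}$ contribution is handled by the inductive hypothesis'' has no valid hypothesis to invoke.

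The paper avoids both issues by iterating over the stratum dimension $k$ from $n$ down to $0$ within the fixed ambient simplex, rather than inducting on $n$. At each level one applies Proposition~\ref{prop_mgf} on every $\Delta_k^{(I_k)}$ and the Duhamel identity for $U_{k-1,I_{k-1}}$ to obtain
\[
(L_kU_k,\phi)_k=-\Big(\dd{}{t}U_{k-1},\phi\Big)_{k-1}+(L_{k-1}U_{k-1},\phi)_{k-1}+(U_k,L_k^*\phi)_k,
\]
and then telescopes. This keeps every face counted exactly once (the reindexing $\sum_{I_k}\int_{\bd_{k-1}\Delta_k^{(I_k)}}=\sum_{I_{k-1}}\int_{\Delta_{k-1}^{(I_{k-1})}}\sum_{I_k\supset I_{k-1}}$ sums fluxes, not faces) and never requires the restriction of $U$ to a closed subsimplex to itself be a hierarchical extension.
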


\begin{proof}
By proposition~\ref{prop_mgf} we have for $U_n\equiv u$ and arbitrary $\phi\in C^\infty\big(\overline{\D_n^{(I_n)}}\big)$
\begin{align}
\bigg(\dd{}{t} U_n,\phi\bigg)_n=\big(L_n U_n,\phi\big)_n=
-\intlim_{\bd_{n-1}\D_n^{(I_n)}}\!\phi\,G_{U_n}^\bot \,d\leb_{n-1}+\big(U_n,L_n^*\phi\big)_n
\end{align}
where $G_{U_n}^\bot=G_{U_n}\cdot\nu$ denotes the (normal) flux of the {continuous extension} of $U_n$ to $\bd_{n-1}\D_n^{(I_n)}$.  
The  boundary integral can be expressed in terms of the evolution of the boundary function $U_{n-1}$ that lives on $\bd_{n-1}\D_n^{(I_n)}$. As this implies a hierarchical dependence on the particular subprocesses, we directly start our consideration for arbitrary $k\in\set{1,\dotsc,n}$. Then we have by proposition~\ref{prop_mgf} and for all $I_k\subset I_n$
\begin{align}
\int\limits_{\D_k^{(I_k)}} (L_k U_{k,I_k})\phi \,d\leb_{k}
=-\intlim_{\bd_{k-1}\D_k^{(I_k)}} \phi \,G^\bot_{U_{k,I_{k}}} \, d\leb_{k-1}+\int\limits_{\D_k^{(I_k)}} U_{k,I_k}L^*_k\phi \,d\leb_{k}
\end{align}
where $G_{U_{k,I_{k}}}$ again denotes the flux of the {continuous extension} of ${U_{k,I_{k}}}$ to $\bd_{k-1}\D_k^{(I_k)}$ (not to be confused with the proper boundary function $U_{k-1}$ on $\bd_{k-1}\D_n^{(I_n)}$).
Thus,  for the whole $k$-dimen\-sional boundary $\bd_k\D_n^{(I_n)}$ of $\D_n^{(I_n)}$, we have to  sum over all $\D_k^{(I_k)}\subset\bd_k\D_n^{(I_n)}$ resp.\ all subsets $I_k\subset I_n$. This yields (because of $\bigcup_{I_k\subset I_n} \D_k^{(I_k)}=\bd_k\D_n^{(I_n)}$ and the definition of~$U_k$)
\begin{align}
\int\limits_{\bd_k\D_n^{(I_n)}}( L_k U_{k}) \phi\,d\leb_{k}
&=\sum_{I_k\subset I_n}\intlim_{\bd_{k-1}\D_k^{(I_k)}}\phi\, G^\bot_{U_{k,I_{k}}} \, d\leb_{k-1}+\int\limits_{\bd_k\D_n^{(I_n)}}  U_{k} L_k^*\phi\,d\leb_{k}.
\end{align}
Transforming the boundary term using $\bigcup_{I_k\subset I_n} \bd_{k-1}\D_k^{(I_k)}=\bigcup_{I_{k-1}\subset I_n} \D_{k-1}^{(I_{k-1})}$ and employing the product notation, we get 
\begin{align}\label{eq_k-boundary}
\big(L_k U_{k}, \phi\big)_k
&=\sum_{I_{k-1}\subset I_n}\intlim_{\D_{k-1}^{(I_{k-1})}}\phi \sum_{I_k\supset{I_{k-1}}} G^\bot_{U_{k,I_{k}}} \, d\leb_{k-1}+ \big(U_{k}, L_k^*\phi\big)_k.
\end{align}
Now, the sum of fluxes appearing here may be expressed in terms of the evolution of the associated boundary function $U_{k-1,I_{k-1}}$ on $\D_{k-1}^{(I_{k-1})}$ for every $I_{k-1}\subset I_n$. By the chain rule, we have on $\D_{k-1}^{(I_{k-1})}$
\begin{align}\notag
\dd{}{t}U_{k-1,I_{k-1}}(x,t)
&=\dd{}{t}\intlim_{0}^t u_{k-1,I_{k-1}}^\tau(x,t-\tau)\,d\tau\\\notag
&=u_{k-1,I_{k-1}}^\tau(x,t-\tau)\big\vert_{\tau=t}+\intlim_{0}^t\dd{}{t} u_{k-1,I_{k-1}}^\tau(x,t-\tau)\,d\tau\\
&=u_{k-1,I_{k-1}}^{t}(x,0)+\intlim_{0}^t L_{k-1} u_{k-1,I_{k-1}}^\tau(x,t-\tau)
\end{align}
by the solution property of $u_{k-1,I_{k-1}}^\tau$. Interchanging $L_{k-1}$ with the $\tau$-integration and substituting $u_{k-1,I_{k-1}}^{t}(x,0)$ by the initial values as prescribed altogether yields
\begin{align}
-\sum_{I_k \supset I_{k-1}} G^\bot_{U_{k,I_{k}}}(x,t)=-\dd{}{t}U_{k-1,I_{k-1}}(x,t)+L_{k-1} U_{k-1,I_{k-1}}(x,t).
\end{align}
Multiplying this with $\phi$, integrating over $\D_{k-1}^{(I_{k-1})}$ and summing over all ${I_{k-1}}\subset{I_n}$ results in
\begin{align}\notag
 &-\sum_{{I_{k-1}\subset{I_n}}}\intlim_{\D_{k-1}^{(I_{k-1})}}\phi \sum_{I_k \supset I_{k-1}} G^\bot_{U_{k,I_{k}}} \, d\leb_{k-1}\\\notag
=&-\sum_{I_{k-1}\subset I_n}\intlim_{\D_{k-1}^{(I_{k-1})}}\phi\,\dd{}{t}U_{k-1,I_{k-1}}\, d\leb_{k-1}
+\sum_{I_{k-1}\subset I_n}\intlim_{\D_{k-1}^{(I_{k-1})}}\phi\, L_{k-1} U_{k-1,I_{k-1}}\, d\leb_{k-1}\\
=&-\bigg(\dd{}{t}U_{k-1},\phi\bigg)_{k-1}+\big(L_{k-1} U_{k-1},\phi\big)_{k-1}
\end{align}
because of $\bigcup_{I_{k-1}\subset I_n} \D_{k-1}^{(I_{k-1})}=\bd_{k-1}\D_n^{(I_n)}$. Combining this with  equation~\eqref{eq_k-boundary}, we get
\begin{align}
\big(L_k U_{k}, \phi\big)_k
&=-\bigg(\dd{}{t}U_{k-1},\phi\bigg)_{k-1}+\big(L_{k-1} U_{k-1},\phi\big)_{k-1}+ \big(U_{k}, L_k^*\phi\big)_k,
\end{align}
which -- by assumption -- holds for all $k\in\set{1,\dotsc,n}$. Hence, this formula may be iterated over~$k$, yielding
\begin{align}\notag
\bigg(\dd{}{t} U_{n}, \phi\bigg)_n
&=\big(L_n U_{n},\phi\big)_n\\\notag
\Leftrightarrow\quad
\bigg(\dd{}{t} U_{n}, \phi\bigg)_n+\bigg(\dd{}{t} U_{n-1}, \phi\bigg)_{n-1}
&=\big(U_{n},L_n^*\phi\big)_n + \big(L_{n-1} U_{n-1},\phi\big)_{n-1}\\\notag
&\hspace*{0.4em}\vdots\\
\Leftrightarrow\quad
\sum_{k=0}^n\bigg(\dd{}{t} U_{k}, \phi\bigg)_k
&=\sum_{k=1}^n \big(U_{k}, L_k^*\phi\big)_k+\big(L_{0} U_{0},\phi\big)_{0}.
\end{align}
The last summand on the right-hand side may (formally) be replaced by $\big(U_{0},L_{0}^*\phi\big)_{0}$ as they both vanish due to $L_0=L_0^*=0$, thus proving the assertion.
\end{proof}

By lemma~\ref{lem_weak=moments+} we immediately obtain: 
\begin{cor}
All moments $\bar{\mu}_\alpha(t)$, $t\geq0$ as defined in equation~\eqref{eq_moments_U} of a \txind{hierarchical extension} $U\colon\big(\overline{\D_n^{(I_n)}}\big)_\infty\map\R$ (\cf definition~\ref{dfi_allU}) of a solution~$u$ of the \KFE\eqref{eq_Ln} in $\D_n$ satisfy the moments evolution equation~\eqref{eq_moments_n}.
\end{cor}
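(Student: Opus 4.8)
The plan is to chain Proposition~\ref{prop_allU} with Lemma~\ref{lem_weak=moments+}: the former says that a hierarchical extension solves the weak form of the \KFE, the latter that the weak form is equivalent to the moments evolution equation, so together they give the claim. The only point needing care is to verify that the hierarchical extension meets the $L^2$-hypotheses under which Lemma~\ref{lem_weak=moments+} is stated.

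First I would invoke Proposition~\ref{prop_allU}: the hierarchical extension $U$ of $u$ satisfies
\[
\bigg[\dd{}{t}U(t),\phi\bigg]_n=\big[U(t),L^*\phi\big]_n
\qquad\text{for all }\phi\in C^\infty\big(\overline{\D_n^{(I_n)}}\big)\text{ and }t\in(0,\infty),
\]
which is precisely the weak formulation~\eqref{eq_weak_hier}. Next I would check that $U(\fdt,t),\dd{}{t}U(\fdt,t)\in L^2\big(\bigcup_{k=0}^n\bd_k\D_n\big)$ for every $t\geq0$, as required by Lemma~\ref{lem_weak=moments+}. On the top stratum $U_n=u$ together with all its spatial derivatives extends continuously to $\cl{\D}_n$ by Proposition~\ref{prop_sol_n}, hence is bounded there and in particular square integrable. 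On a face $\D_k^{(I_k)}$ the component $U_{k,I_k}(x,t)=\int_0^t u^\tau_{k,I_k}(x,t-\tau)\,d\tau$ is an integral of functions which, again by Proposition~\ref{prop_sol_n}, extend continuously together with their spatial derivatives to the compact set $\cl{\D_k^{(I_k)}}$; combined with the continuous dependence on the remaining parameters this makes $U_{k,I_k}$ bounded on $\cl{\D_k^{(I_k)}}$, and the same reasoning applies to its time derivative, which by the chain-rule identity used in the proof of Proposition~\ref{prop_allU} equals $u^t_{k,I_k}(x,0)+\int_0^t L_k u^\tau_{k,I_k}(x,t-\tau)\,d\tau$. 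Summing over the finitely many faces and over $k=0,\dots,n$ would then yield membership in $L^2\big(\bigcup_{k=0}^n\bd_k\D_n\big)$.

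With the hypotheses in place, I would apply Lemma~\ref{lem_weak=moments+} in its ``weak formulation $\Rightarrow$ moments equation'' direction: taking $\phi=x^\alpha$ in the identity above and using $L^*x^\alpha=\half\sum_{i=1}^n\alpha_i(\alpha_i-1)x^{\alpha-e_i}-\half\abs{\alpha}(\abs{\alpha}-1)x^\alpha$, established just before Lemma~\ref{lem_weak=moments+}, one reads off $\dd{}{t}\bar\mu_\alpha(t)=-\tfrac{\abs{\alpha}(\abs{\alpha}-1)}{2}\bar\mu_\alpha(t)+\sum_{i=1}^n\tfrac{\alpha_i(\alpha_i-1)}{2}\bar\mu_{\alpha-e_i}(t)$ for $\abs{\alpha}\geq1$, i.e.\ \eqref{eq_moments_n}; the case $\abs{\alpha}=0$, i.e.\ $\dd{}{t}\bar\mu_0(t)=0$, comes from $\phi\equiv1$ together with $L^*1=0$. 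The hard part is really the regularity bookkeeping of the second step --- making sure the iterated boundary integrals $U_{k,I_k}$ and their $t$-derivatives are well defined and lie in the correct $L^2$-space, so that the symbol $\dd{}{t}U$ inside the hierarchical product is legitimate and differentiation under the integral in $\bar\mu_\alpha(t)=[U(t),x^\alpha]_n$ is justified; once that is done the corollary is immediate.
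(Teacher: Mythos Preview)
Your approach is correct and matches the paper's: the sentence immediately preceding the corollary reads ``By lemma~\ref{lem_weak=moments+} we immediately obtain,'' so the intended argument is exactly Proposition~\ref{prop_allU} $\Rightarrow$ weak formulation~\eqref{eq_weak_hier} $\Rightarrow$ moments equation~\eqref{eq_moments_n} via Lemma~\ref{lem_weak=moments+}. The displayed proof in the paper actually only spells out the trivial cases $\phi=1$ and $\phi=x^i$ (conservation of mass and first moments), so your version---which handles all $\alpha$ and additionally checks the $L^2$ integrability hypothesis of Lemma~\ref{lem_weak=moments+}---is in fact more complete than what the paper writes down.
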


\begin{proof}
For $\phi=1$ and $\phi=x^i$, we have $L^*(\phi)=0$, thus by equation~\eqref{eq_allU}
\[
\sum_{k=0}^n \bigg(\dd{}{t}U_k,\phi\bigg)_k=0.\qedhere
\]
\end{proof}

Thus, the \txind{hierarchical extension} of a solution of the \KFE\eqref{eq_Ln} via the flux of the solution yields the `right' boundary values on the entire $\bd\D_n$ in the sense that all moments of the process defined via the hierarchical product $[\dcd]_n$ in equation~\eqref{eq_moments_U} do behave like the limit of the moments underlying the discrete processes, which as well confirms the specific choice of $[\dcd]_n$.

Moreover, 
we may show that any extension of a solution of the \KFE\eqref{eq_Ln} to $\cl{\D}_n$ yielding the correct moments already coincides with the \txind{hierarchical extension} as in definition~\ref{dfi_allU}. This is due to lemma~\ref{lem_weak=moments+} and the following proposition:

\begin{prop}\label{prop_weak_unique} For any initial condition $f\in L^2(\D_n)$,  a solution $U\co{\big(\overline{\Delta}_{n}\big)}_{\infty}\map\R$  of the weak \KFE\eqref{eq_weak_hier} is uniquely defined on $\overline{\Delta}_{n}$.
\end{prop}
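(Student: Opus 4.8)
The plan is to split the uniqueness into two independent facts: that the moments of any weak solution are uniquely determined by $f$, and that a function in $L^2\big(\bigcup_{k=0}^n\bd_k\D_n\big)$ is determined by its moments $[\fdt,x^\alpha]_n$. Concretely, I would take two weak solutions $U_1,U_2$ of \eqref{eq_weak_hier} sharing the same initial datum $f$, set $W(\fdt,t)\ce U_1(\fdt,t)-U_2(\fdt,t)$, and show $W(\fdt,t)=0$ for every $t$. Recall that a weak solution satisfies $U(\fdt,t)\in L^2\big(\bigcup_{k=0}^n\bd_k\D_n\big)$ by definition, so its moments are well defined and Lemma~\ref{lem_weak=moments+} is applicable.

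First I would invoke Lemma~\ref{lem_weak=moments+}: the moments $\bar{\mu}_\alpha^{(i)}(t)\ce[U_i(t),x^\alpha]_n$ satisfy the moment evolution equations~\eqref{eq_moments_n} with initial values $\bar{\mu}_\alpha^{(i)}(0)=[f,x^\alpha]_n$ prescribed by $f$. This linear system is triangular once the multi-indices are ordered by $\abs{\alpha}$, since the equation for $\bar{\mu}_\alpha$ couples it only to itself and to the strictly lower-order moments $\bar{\mu}_{\alpha-e_i}$, while $\dd{}{t}\bar{\mu}_0=0$. Solving level by level in $\abs{\alpha}$, at each level one is left --- after inserting the already-determined lower-order moments --- with a scalar inhomogeneous linear ODE $\dd{}{t}\bar{\mu}_\alpha=-\tfrac{\abs{\alpha}(\abs{\alpha}-1)}{2}\bar{\mu}_\alpha+g_\alpha(t)$ with known forcing and known initial value, hence a unique solution. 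Thus all moments of $U_1(\fdt,t)$ and $U_2(\fdt,t)$ coincide, i.e.\ $[W(t),x^\alpha]_n=0$ for every multi-index $\alpha$ and every $t\ge0$.

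Next I would deduce $W(t)=0$ in $L^2\big(\bigcup_k\bd_k\D_n\big)$ from the vanishing of all its moments. Writing $W=\sum_{k=0}^n W_k\chi_{\bd_k\D_n}$ with $W_k\ce W\vert_{\bd_k\D_n}$, linearity gives $[W,p]_n=0$ for every polynomial $p$, and since the polynomials are dense in $C(\cl{\D}_n)$ (Stone--Weierstrass) while uniform convergence on the compact set $\cl{\D}_n$ forces $L^2$-convergence on each stratum, $[W,\phi]_n=0$ for every $\phi\in C^\infty(\cl{\D}_n)$. Then I would argue by downward induction on $k$. For $k=n$, any $\phi\in C_c^\infty(\D_n)$ vanishes on $\bd\D_n$, so $[W,\phi]_n=(W_n,\phi)_n=0$ for $\phi$ ranging over a set dense in $L^2(\D_n)$, giving $W_n=0$ a.e.\ on $\D_n$. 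For the step, assuming $W_j=0$ on $\bd_j\D_n$ for all $j>k$, I fix a $k$-face $\D_k^{(I_k)}$ and $\psi\in C_c^\infty(\D_k^{(I_k)})$. Because $\supp\psi$ is compactly contained in the relative interior of $\D_k^{(I_k)}$, it lies at positive distance from the union of all faces of $\cl{\D}_n$ of dimension $\le k$ other than $\D_k^{(I_k)}$, so $\psi$ extends to some $\phi\in C^\infty(\cl{\D}_n)$ whose support meets none of those faces --- hence $\phi$ vanishes on $\bd_j\D_n$ for $j<k$ and on every $k$-face different from $\D_k^{(I_k)}$, while its support may meet only faces of dimension $>k$. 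Expanding $[W,\phi]_n=\sum_{j}(W_j,\phi)_j$, the terms with $j>k$ vanish by the inductive hypothesis, the terms with $j<k$ because $\phi\equiv 0$ there, and the term $j=k$ reduces to $\int_{\D_k^{(I_k)}}W_k\,\psi\,d\leb_k$; this vanishes for all such $\psi$, so $W_k=0$ a.e.\ on $\D_k^{(I_k)}$, and since $I_k$ was arbitrary, $W_k=0$ on $\bd_k\D_n$. This closes the induction and yields $W(\fdt,t)=0$ for all $t$.

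I expect the triangular-ODE step for the moments to be entirely routine; the main obstacle is the last step, namely disentangling the contributions of the individual boundary strata inside the single hierarchical pairing $[\dcd]_n$. This is exactly where the geometry of $\cl{\D}_n$ is used: a face of dimension $k$ is approached only by faces of dimension $>k$, while its relative interior separates it from all faces of dimension $\le k$; this lets one build cutoff test functions isolating a single stratum, provided the higher-dimensional strata have already been handled --- which is why the induction runs downward in the dimension.
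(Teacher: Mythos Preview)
Your argument is correct and takes a genuinely different route from the paper's. The paper also runs a downward induction on the stratum dimension, but it does not first reduce to the moment ODE system: instead, at each level $m$ it picks test functions of the form $\psi=\omega_m\phi$ with $\phi$ an eigenfunction of $L_m$ (Lemma~\ref{lem_ef-shift}), so that $\psi\in C_0^\infty(\cl{\D}_m)$ is an eigenfunction of $L_m^*$; after extending $\psi$ to $\cl{\D}_n$ and cancelling the higher-dimensional contributions by the inductive hypothesis, the weak equation collapses to the scalar ODE $\dd{}{t}(U-U',\psi)_m=-\lambda(U-U',\psi)_m$ with zero initial value, whence $(U-U',\psi)_m=0$. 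Density of such $\psi$ in $C_c^\infty(\cl{\D}_m)$ (Lemma~\ref{lem_complete-weak}) then gives $U=U'$ on $\bd_m\D_n$.

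So the paper interleaves the dynamical and spatial steps, using the spectral structure of $L_m^*$ at every stage; you instead decouple them, handling the dynamics once and for all via the triangular moment system (through Lemma~\ref{lem_weak=moments+}) and then running a purely static separation-of-strata argument with Stone--Weierstrass and bump-function extensions. Your approach is arguably more elementary --- it avoids Lemma~\ref{lem_complete-weak} and the eigenfunction machinery entirely --- and makes explicit that uniqueness is really two independent facts: determinacy of the moment sequence by $f$, and determinacy of an $L^2\big(\bigcup_k\bd_k\D_n\big)$-function by its hierarchical moments. The paper's route, on the other hand, stays closer to the explicit solution representation of Proposition~\ref{prop_sol_n} and would generalise more readily to situations where the moment hierarchy does not close as cleanly.
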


\begin{cor}
For any initial condition $f\in L^2(\D_n)$, 
a solution $U\co{\big(\overline{\Delta}_{n}\big)}_{\infty}\map\R$ of the weak \KFE\eqref{eq_weak_hier} 
coincides with the \txind{hierarchical extension} $U\co{\big(\overline{\Delta}_{n}\big)}_{\infty}\map\R$ (\cf definition~\ref{dfi_allU}) of a solution~$u$ of the (strong) \KFE\eqref{eq_Ln} in $\D_n$.
\end{cor}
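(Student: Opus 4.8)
The plan is to use Lemma~\ref{lem_weak=moments+} to trade the weak \KFE for the moment evolution equations~\eqref{eq_moments_n}, to conclude from these that the moments of a weak solution are uniquely determined by $f$, and finally to recover $U(\cdot,t)$ from its moments. Accordingly, suppose $U_1$ and $U_2$ both solve the weak \KFE\eqref{eq_weak_hier} with the same initial datum $f$, and set $V\ce U_1-U_2$; then $V$ solves~\eqref{eq_weak_hier} with zero initial datum and $V(\cdot,t)\in L^2\big(\bigcup_{k=0}^n\bd_k\D_n\big)$ for all $t\geq0$, so it suffices to show that $V(\cdot,t)=0$ in $L^2\big(\bigcup_{k=0}^n\bd_k\D_n\big)$ for every $t\geq0$.

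First I would show that all moments of $V$ vanish identically. Every monomial $x^\alpha$ lies in $C^\infty(\cl{\D}_n)$, so testing~\eqref{eq_weak_hier} against $\phi=x^\alpha$ and inserting the computation of $L_n^*x^\alpha$ from Section~\ref{sec_weak} --- this is precisely the content of Lemma~\ref{lem_weak=moments+} --- shows that $\bar\nu_\alpha(t)\ce[V(\cdot,t),x^\alpha]_n$ satisfies the moment evolution equation~\eqref{eq_moments_n} with $\bar\nu_\alpha(0)=[0,x^\alpha]_n=0$. The system~\eqref{eq_moments_n} is triangular with respect to $\abs{\alpha}$: the equation for $\bar\nu_\alpha$ couples it only to the $\bar\nu_{\alpha-e_i}$, which have degree $\abs{\alpha}-1$. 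Hence, arguing by induction on $\abs{\alpha}$ and using uniqueness for the (at each degree finitely many) resulting linear scalar ODEs, one obtains $\bar\nu_\alpha\equiv0$ for every $\alpha$. Thus $[V(\cdot,t),x^\alpha]_n=0$ for all $\alpha\in\N_0^n$ and all $t\geq0$; for $t=0$ this is already the weak initial condition.

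It then remains to deduce $V(\cdot,t)=0$ from the vanishing of all of its moments, for $t$ fixed. Here I would introduce the finite signed Borel measure $\mu\ce\sum_{k=0}^n\mu_k$ on the compact set $\cl{\D}_n$, where $\mu_k(A)\ce\int_{A\cap\bd_k\D_n}V(\cdot,t)\,\leb_k(dx)$; each $\mu_k$ is finite because $\leb_k(\bd_k\D_n)<\infty$ and $V(\cdot,t)\in L^2(\bd_k\D_n)$, and by~\eqref{eq_prod_delta_k} one has $\int_{\cl{\D}_n}\phi\,d\mu=[V(\cdot,t),\phi]_n$ for every $\phi\in C(\cl{\D}_n)$. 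By the previous step this integral vanishes for every polynomial $\phi$; since a finite signed Borel measure on a compact subset of $\R^n$ is determined by its moments (density of polynomials in $C(\cl{\D}_n)$ by Stone--Weierstrass, together with the Riesz representation theorem), it follows that $\mu=0$. Now the relatively open faces $\D_k^{(I_k)}$ partition $\cl{\D}_n$, so the sets $\bd_k\D_n$ for $k=0,\dotsc,n$ are pairwise disjoint; hence the $\mu_k$, each concentrated on $\bd_k\D_n$, are mutually singular, and $\sum_{k=0}^n\mu_k=0$ forces each $\mu_k=0$ (for instance $\mu_0=-\sum_{k\geq1}\mu_k$ is then singular with respect to itself, hence zero, and one iterates). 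Therefore $V(\cdot,t)=0$ $\leb_k$-almost everywhere on $\bd_k\D_n$ for every $k$, i.e.\ $V(\cdot,t)=0$ in $L^2\big(\bigcup_{k=0}^n\bd_k\D_n\big)$. As $t\geq0$ was arbitrary, $U_1=U_2$, so the weak solution is uniquely defined on $\cl{\D}_n$.

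I expect the last step to be the one most in need of care: one must check that compressing the information about $U(\cdot,t)$ into the single scalar family of moments $[U(\cdot,t),x^\alpha]_n$ does not irreversibly mix the contributions of the various boundary strata. This is exactly what the mutual singularity of the measures $\leb_k$ restricted to $\bd_k\D_n$ --- itself a consequence of the disjointness of the relatively open faces of the simplex --- prevents; without it the moments could not distinguish, say, a mass distribution in the interior from one concentrated on a facet. By contrast, the triangular structure of~\eqref{eq_moments_n} and the determinacy of the moment problem on a compact set are routine.
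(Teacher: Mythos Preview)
Your argument is correct, and it takes a genuinely different route from the paper. What you have actually written is an alternative proof of Proposition~\ref{prop_weak_unique} (uniqueness of weak solutions); the corollary then follows once you invoke Proposition~\ref{prop_allU}, which shows that the hierarchical extension of Definition~\ref{dfi_allU} is itself a weak solution. You should make that last step explicit, since your text only establishes uniqueness and does not mention that the hierarchical extension is one of the competitors $U_1,U_2$.

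The paper's proof of Proposition~\ref{prop_weak_unique} proceeds stratum by stratum, using the eigenfunctions $\psi=\omega_k\phi\in C_0^\infty(\cl\D_k)$ of $L_k^*$ furnished by Lemma~\ref{lem_ef-shift} and Lemma~\ref{lem_complete-weak}: testing against such $\psi$ isolates the top-dimensional contribution (because $\psi$ vanishes on $\bd\D_k$), reduces the weak equation to the scalar ODE $\dd{}{t}(U,\psi)_k=-\lambda(U,\psi)_k$, and then descends inductively to the lower strata by extending eigenfunctions on $\cl\D_m$ to test functions on $\cl\D_n$. Your proof instead compresses the stratification into a single signed measure $\mu=\sum_k\mu_k$ on the compact $\cl\D_n$, uses Stone--Weierstrass to determine $\mu$ from its moments, and then unpacks the strata via the mutual singularity of the $\mu_k$ (which, as you correctly note, rests on the disjointness of the relatively open faces). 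The paper's approach ties the uniqueness argument to the spectral structure already developed and mirrors the hierarchical construction; yours is more self-contained and avoids the eigenfunction machinery and Lemma~\ref{lem_complete-weak} altogether, at the price of importing the classical moment problem and a measure-theoretic separation argument. Both exploit the triangularity of~\eqref{eq_moments_n} --- the paper implicitly, through the eigenvalue ODE; you explicitly, through induction on $|\alpha|$.
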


For the proof of proposition~\ref{prop_weak_unique}, we need the following lemma:
\begin{lem}\label{lem_complete-weak}
The linear span of 
$\bigset{\omega_n\phi\in C_0^\infty\big (\cl{\Delta}_n\big)\big|\phi\text{ eigenfunction of $L_n$}}$ is dense in $C_c^\infty(\cl{\D}_n)$.
\end{lem}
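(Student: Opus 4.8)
The plan is to reduce the statement to classical polynomial approximation via the substitution $h = g/\omega_n$, exploiting that test functions in $C_c^\infty(\cl{\Delta}_n)$ are supported away from the boundary, precisely where $\omega_n$ degenerates. Concretely, I will establish three facts in turn: the span of $\{\omega_n\phi\}$ over eigenfunctions $\phi$ of $L_n$ equals $\omega_n$ times the space of all polynomials; each such $\omega_n\phi$ automatically lies in $C_0^\infty(\cl{\Delta}_n)$; and $\omega_n\cdot\R[x^1,\dots,x^n]$ approximates every $g\in C_c^\infty(\cl{\Delta}_n)$ uniformly, hence also in $L^2(\D_n)$.

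For the first fact I invoke Proposition~\ref{prop_Gegenb_n}: for every $l\ge 0$ and every multi-index $\alpha$ with $\abs{\alpha}=l$, the generalised Gegenbauer polynomial $C_{l,\alpha}=x^\alpha+\sum_{\abs{\beta}<l}a_{l,\beta}x^\beta$ is an eigenfunction of $L_n$ on $\cl{\Delta}_n$. Ordering monomials by total degree, the family $\{C_{l,\alpha}\}_{l,\alpha}$ is triangular with unit diagonal relative to the monomial basis $\{x^\alpha\}$, so it is a linear basis of $\R[x^1,\dots,x^n]$; consequently $\linspan\{\omega_n\phi\mid\phi\text{ eigenfunction of }L_n\}$ contains $\omega_n\,p$ for every polynomial $p$. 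Moreover, since $\omega_n=\prod_{k=1}^n x^k\big(1-\sum_{l=1}^n x^l\big)$ vanishes identically on $\bd\D_n$ and is smooth, each $\omega_n C_{l,\alpha}$ lies in $C_0^\infty(\cl{\Delta}_n)$ (\cf~Lemma~\ref{lem_ef-shift}). Hence the set in the statement contains all $\omega_n C_{l,\alpha}$, and it suffices to show that $\omega_n\cdot\R[x^1,\dots,x^n]$ is dense in $C_c^\infty(\cl{\Delta}_n)$.

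Now fix $g\in C_c^\infty(\cl{\Delta}_n)$ and $\eps>0$. By \eqref{eq_def_Cc}, $K\ce\supp g$ is a compact subset of the \emph{open} simplex $\D_n$, on which $\omega_n$ is strictly positive, so $h\ce g/\omega_n$ (extended by $0$ on $\cl{\Delta}_n\smin K$) is well defined; near $K$ it is a quotient of smooth functions with non-vanishing denominator, and it vanishes identically on a neighbourhood of $\bd\D_n$ because $K$ has positive distance from $\bd\D_n$, so $h\in C_c^\infty(\cl{\Delta}_n)$. Since $\cl{\Delta}_n$ is compact, the Weierstrass approximation theorem gives a polynomial $p$ with $\norm{h-p}_\infty<\eps/(1+\norm{\omega_n}_\infty)$, where $\norm{\fdt}_\infty$ is the supremum norm on $\cl{\Delta}_n$ and $\norm{\omega_n}_\infty<\infty$ since $\omega_n$ is a polynomial. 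Then
\begin{align*}
\norm{g-\omega_n p}_\infty=\norm{\omega_n(h-p)}_\infty\le\norm{\omega_n}_\infty\,\norm{h-p}_\infty<\eps,
\end{align*}
and, as $\vol(\D_n)<\infty$, also $\norm{g-\omega_n p}_{L^2(\D_n)}\le\sqrt{\vol(\D_n)}\,\eps$. Since $\omega_n p\in\linspan\{\omega_n\phi\}$ by the first step, this yields the claimed density, both uniformly and in $L^2(\D_n)$.

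The only genuinely delicate point is the smoothness of $h=g/\omega_n$ up to $\bd\D_n$, which relies entirely on $g$ being supported away from the boundary; this is exactly why the statement is phrased for $C_c^\infty(\cl{\Delta}_n)$ rather than $C_0^\infty(\cl{\Delta}_n)$, since for the latter $g/\omega_n$ would in general blow up at the boundary. Everything else is routine polynomial approximation together with the triangular structure of the Gegenbauer eigenfunctions.
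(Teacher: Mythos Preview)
Your proof is correct and follows essentially the same approach as the paper: divide a given $g\in C_c^\infty(\cl{\Delta}_n)$ by $\omega_n$ (which is legitimate precisely because of the compact support), approximate the quotient by polynomials/eigenfunctions via Proposition~\ref{prop_Gegenb_n}, and multiply back. Your version is simply more explicit---spelling out the triangular structure of the Gegenbauer family, invoking Weierstrass directly, and flagging the role of compact support---whereas the paper compresses all of this into two sentences.
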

\begin{proof}
From proposition~\ref{prop_Gegenb_n} we already see that the linear combinations of the eigenfunctions of $L_n$ are dense in $C^\infty(\cl{\D}_n)$. Dividing a function $f\in C_c^\infty(\cl{\D}_n)$ by $\omega_n$ (\cf lemma~\ref{lem_ef-shift}) again yields a function in $C_c^\infty(\cl{\D}_n)\subset C_0^\infty(\cl{\D}_n)$ as $\omega_n$ is in $C_0^\infty(\cl{\D}_n)$ itself and positive in the interior~${\Delta}_{n}$.
\end{proof}

\begin{proof}[Proof of proposition~\ref{prop_weak_unique}] 
Assume that $U^\prime\co{\big(\overline{\Delta}_{n}\big)}_{\infty}\map\R$ is another solution of equation~\eqref{eq_weak_hier} for a given initial condition~$f$.
We need to show that $U$ and $U^\prime$ agree on all $\bd_k\D_n\subset\overline{\Delta}_n$ for $k=n,\dotsc,0$. We start with $\bd_n\D_n\equiv\D_n$. 
For an eigenfunction $\phi\in C^\infty(\cl{\D}_n)$ of $L_n$ (corresponding to the eigenvalue $\lambda$), we obtain by lemma~\ref{lem_ef-shift} that $\psi\ce\omega_n\phi$ is an eigenfunction of $L_n^*$ with eigenvalue $\lambda$ and, by the properties  of $\omega_n$, that  $\psi\in C^\infty_0(\cl{\D}_n)$. For such a $\psi$, the weak \KFE\eqref{eq_weak_hier} then reduces to
\begin{alignat}{2}
\bigg(\dd{}{t}{U},\psi\bigg)_n	&= ({U},L_n^*\psi)_n	&&\equiv-\lambda(U,\psi)_n
\intertext{and}
\bigg(\dd{}{t}{U^\prime},\psi\bigg)_n	&=({U^\prime},L_n^*\psi)_n	&&\equiv-\lambda(U^\prime,\psi)_n
\end{alignat}
respectively. Consequently, by $t$-integration we have
\begin{align}\label{eq_agree}
 (U(t),\psi)_n	&=e^{-\lambda t}(U(0),\psi)_n,\\
(U^\prime(t),\psi)_n	&=e^{-\lambda t}(U^\prime(0),\psi)_n,
\end{align}
from which we obtain via  $U(0)=U^\prime(0)=f$ 
\begin{align}
(U(t),\psi)_n=(U^\prime(t),\psi)_n\quad\text{for $t\geq 0$}
\end{align}
and for all eigenfunctions $\psi$. Since the linear span of these functions is dense in $C_c^\infty(\cl{\D}_n)$ (\cf lemma~\ref{lem_complete-weak}), $U$ and $U^\prime$ agree in ${\Delta}_{n}$, indeed. 

Now, we proceed inductively. Assume that we have already shown that $U$ and $U^\prime$  agree on all $\bd_k\D_n\subset \cl{\D}_n$ with $k>m$. Then for an eigenfunction $\phi\co \cl{\Delta}_m\map\R$ of $L_m$ (corresponding to the eigenvalue $\lambda$), $\psi\ce\omega_m\phi$ again is an eigenfunction of $L_m^*$ with eigenvalue $\lambda$ and $\psi\in C^\infty_0(\cl{\D}_m)$. From any such $\psi\co\cl{\Delta}_m\map\R$, a function $\overline{\psi}\co \overline{\Delta^{(I_n)}_n}\map\R$ may be composed, %
\eg by copying $\psi$ to $\D_m^{(I_{m})}\subset\bd_m\D_n$ for all $I_m\subset I_n$ and employing convex combinations of the boundary values to spread to all higher dimensional (boundary) instances subsequently while putting $\overline{\psi}\ce 0$ on all lower dimensional boundary instances. Of course, $\overline{\psi}$ in general is  not an eigenfunction of $L^*$ in $\cl{\D}_n$, but we still have $(L^*\overline{\psi})\big\vert_{\D_m^{(I_{m})}}=L_m^* \psi=-\lambda\psi$ for all ${\D_m^{(I_{m})}}\subset\bd\D_n$.

For such a $\overline{\psi}$, the weak \KFE\eqref{eq_weak_hier} is converted into
\begin{align}
\bigg(\dd{}{t}{U},\overline{\psi}\bigg)_m
&= -\big({U},L_n^*\overline{\psi}\big)_m+\sum_{k=m+1}^n \bigg(\bigg(\dd{}{t}{U},\overline{\psi}\bigg)_k- \big({U},L_k^*\overline{\psi}\big)_k\bigg)
\intertext{and}
\bigg(\dd{}{t}{U^\prime},\overline{\psi}\bigg)_m
&= -\big({U^\prime},L_n^*\overline{\psi}\big)_m+\sum_{k=m+1}^n \bigg(\bigg(\dd{}{t}{U^\prime},\overline{\psi}\bigg)_k- \big({U^\prime},L_k^*\overline{\psi}\big)_k\bigg)
\end{align}
with the sums on the right agreeing as $U=U^\prime$ on all $\bd_k\D_n$ with $k > m$, hence
\begin{align}
\bigg(\dd{}{t}({U}-U^\prime),\overline{\psi}\bigg)_m= \big({U^\prime}-{U},L_n^*\overline{\psi}\big)_m \equiv \lambda \big ({U^\prime}-{U},\overline{\psi}\big)_m,
\end{align}
which yields -- analogously to our considerations above -- $U=U^\prime$ in $\bd_m\D_n$ on account of the completeness of the $\overline{\psi}$'s and the initial condition.
\end{proof}

Thus, with the additional assumptions that the moments of the process coincide with the limits of the moments of the underlying discrete processes, we altogether have:
\begin{thm}\label{thm_moments_n}
For $n\in\N$ and a given  initial condition $f\in L^2(\D_n)$, the \KFE corresponding to the diffusion approximation of the  $n$-dimen\-sional \WF model  \eqref{eq_Ln} always possesses a unique {extended solution} 
$U\co{\big(\overline{\Delta}_{n}\big)}_{\infty}\map\R$
in the sense that $U\vert_{\D_n}$ is a solution of equation~\eqref{eq_Ln} and  its moments
$\bar{\mu}_\alpha(t)\ce\big[U(t),x^\alpha\big]_n$, $t\geq 0$
(\cf equation~\eqref{eq_moments_U}) satisfy the $n$-dimen\-sional moments evolution  equation~\eqref{eq_moments_n}.
\end{thm}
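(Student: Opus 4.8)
The plan is to obtain Theorem~\ref{thm_moments_n} by assembling the results established above: existence by exhibiting a concrete candidate, namely the hierarchical extension of Definition~\ref{dfi_allU}, and uniqueness by reformulating the two defining properties of an extended solution as the weak \KFE\ and then invoking Proposition~\ref{prop_weak_unique}. For existence I would first apply Proposition~\ref{prop_sol_n} to the given $f\in L^2(\D_n)$ to obtain the unique solution $u$ of \eqref{eq_Ln} in $\D_n$, together with the fact that $u$ and all its spatial derivatives --- hence the flux $G_u$ --- extend continuously to $\bd\D_n$. I would then build the hierarchical extension $U\co\big(\overline{\Delta}_n\big)_\infty\map\R$ of Definition~\ref{dfi_allU}, checking recursively, descending in the stratum dimension $k$, that the construction is well posed: at each step the relevant normal flux $G^\bot_{U_{k+1,I_{k+1}}}$ is a continuous function on a compact face, hence lies in $L^2$, so the Duhamel-type integral $U_{k,I_k}(x,t)=\intlim_0^t u^\tau_{k,I_k}(x,t-\tau)\,d\tau$ of solutions furnished by Proposition~\ref{prop_sol_n} is again well defined, extends continuously to the next lower stratum, and has a well-defined flux there. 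This also secures $U(\fdt,t),\dd{}{t}U(\fdt,t)\in L^2\big(\bigcup_{k=0}^n\bd_k\D_n\big)$ for $t\geq0$. By construction $U\vert_{\D_n}=u$ solves \eqref{eq_Ln}; by Proposition~\ref{prop_allU}, $U$ satisfies the weak \KFE\eqref{eq_weak_hier}; and by the Corollary to Proposition~\ref{prop_allU} (equivalently Lemma~\ref{lem_weak=moments+}) its moments $\bar{\mu}_\alpha(t)=[U(t),x^\alpha]_n$ satisfy the moments evolution equation~\eqref{eq_moments_n}. Hence $U$ is an extended solution in the sense of the statement.

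For uniqueness, let $U'\co\big(\overline{\Delta}_n\big)_\infty\map\R$ be any extended solution for the same $f$. Its restriction $U'\vert_{\D_n}$ solves \eqref{eq_Ln} with initial datum $f$, so the uniqueness part of Proposition~\ref{prop_sol_n} forces $U'\vert_{\D_n}=u=U\vert_{\D_n}$. Since the moments of $U'$ satisfy \eqref{eq_moments_n}, Lemma~\ref{lem_weak=moments+} shows that $U'$ solves the weak \KFE\eqref{eq_weak_hier}, with the corresponding weak initial condition $[U'(\fdt,0),\phi]_n=[f,\phi]_n$ (using the convention $f\vert_{\bd\D_n}\equiv0$). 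Proposition~\ref{prop_weak_unique} then asserts that a solution of \eqref{eq_weak_hier} is uniquely determined on $\overline{\Delta}_n$, whence $U'=U$ on $\big(\overline{\Delta}_n\big)_\infty$; in particular the extended solution coincides with the hierarchical extension of Definition~\ref{dfi_allU}, recovering the Corollary to Proposition~\ref{prop_weak_unique}.

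The analytic substance has already been discharged by the auxiliary results --- the boundary-flux identity of Proposition~\ref{prop_mgf}, its hierarchical iteration in Proposition~\ref{prop_allU}, and the eigenfunction/density argument behind Proposition~\ref{prop_weak_unique} (through Lemmas~\ref{lem_ef-shift} and~\ref{lem_complete-weak}). For the theorem itself the only point demanding care, and the one I would treat most carefully, is bookkeeping at the level of definitions: one must make sure that anything called an ``extended solution'' in the statement actually meets the standing hypotheses of Lemma~\ref{lem_weak=moments+} --- namely that both $U'(\fdt,t)$ and $\dd{}{t}U'(\fdt,t)$ lie in $L^2\big(\bigcup_{k=0}^n\bd_k\D_n\big)$ for each $t\geq0$, so that the moments and their time derivatives are defined and the passage to the weak form is legitimate --- and that the measure conventions $\leb_k$ on faces with and without the index $0$ are applied consistently throughout the hierarchical product $[\dcd]_n$. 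I would fold this integrability requirement into the very notion of extended solution (as is tacitly done around equation~\eqref{eq_moments_U}) rather than attempt to prove it for an otherwise unconstrained $U'$.
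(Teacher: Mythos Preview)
Your proposal is correct and follows exactly the route the paper takes: the theorem is stated as the culmination (``Thus \ldots\ we altogether have'') of the preceding chain --- Proposition~\ref{prop_sol_n} for the interior solution, Definition~\ref{dfi_allU} for the hierarchical extension, Proposition~\ref{prop_allU} and its Corollary for existence of an extended solution with the right moments, and Lemma~\ref{lem_weak=moments+} together with Proposition~\ref{prop_weak_unique} for uniqueness. Your write-up is in fact more explicit than the paper's, which gives no separate proof; your care about the $L^2$ integrability hypotheses and the well-posedness of the recursive flux construction is appropriate bookkeeping that the paper leaves implicit.
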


\subsection*{Acknowledgements}
The research leading to these results has received funding from the European Research Council under the European Union's Seventh Framework Programme (FP7/2007-2013) / ERC grant agreement n$^\circ$~267087. J.\,H. and T.\,D.\,T. have also been supported by scholarships from the IMPRS ``Mathematics in the Sciences'' during earlier stages of this work. 

We also wish to thank the referee of \textit{Communications in Mathematical Sciences} for his/her useful suggestions which we think improve our presentation.

\end{document}